\theoremstyle{plain}
\newtheorem{lma}{Lemma}[section]
\crefname{lma}{Lemma}{Lemmata}
\newtheorem{thm}[lma]{Theorem}
\crefname{thm}{Theorem}{Theorems}
\newtheorem{cor}[lma]{Corollary}
\crefname{cor}{Corollary}{Corollaries}
\newtheorem{prp}[lma]{Proposition}
\crefname{prp}{Proposition}{Propositions}
\theoremstyle{definition}
\newtheorem{pgr}[lma]{}
\crefname{pgr}{Paragraph}{Paragraphs}
\newtheorem{dfn}[lma]{Definition}
\crefname{dfn}{Definition}{Definitions}
\theoremstyle{remark}
\newtheorem{rmk}[lma]{Remark}
\crefname{rmk}{Remark}{Remarks}
\newtheorem{exa}[lma]{Example}
\crefname{exa}{Example}{Examples}
\newtheorem{qst}[lma]{Question}
\crefname{qst}{Question}{Questions}
\theoremstyle{plain}
\newcounter{IntroCount}
\newtheorem{dfnIntro}[IntroCount]{Definition}
\crefname{dfnIntro}{Definition}{Definitions}
\newtheorem{thmIntro}[IntroCount]{Theorem}
\crefname{thmIntro}{Theorem}{Theorems}
\newtheorem{exaIntro}[IntroCount]{Example}
\crefname{exaIntro}{Example}{Examples}
\def\today{\number\day\space\ifcase\month\or   January\or February\or
   March\or April\or May\or June\or   July\or August\or September\or
   October\or November\or December\fi\   \number\year}
\newcommand{\andSep}{\,\,\,\text{ and }\,\,\,}
\newcommand{\axiomO}[1]{(O#1)}
\newcommand{\CuSgp}{$\mathrm{Cu}$-sem\-i\-group}
\newcommand{\CuMor}{$\mathrm{Cu}$-mor\-phism}
\newcommand{\calZ}{\mathcal{Z}}
\newcommand{\calC}{\mathcal{C}}
\newcommand{\calD}{\mathcal{D}}
\newcommand{\calE}{\mathcal{E}}
\newcommand{\calF}{\mathcal{F}}
\newcommand{\calP}{\mathcal{P}}
\newcommand{\calQ}{\mathcal{Q}}
\newcommand{\NN}{{\mathbb{N}}}
\newcommand{\Cpct}{{\mathcal{K}}}
\newcommand{\ca}{$C^*$-algebra}
\DeclareMathOperator{\Sep}{Sep}
\DeclareMathOperator{\Cu}{Cu}
\title{Extensions of pure C*-algebras}
\date{\today}
\author{Francesc Perera}
\address{
F.~Perera, 
Departament de Matem\`{a}tiques,
Universitat Aut\`{o}noma de Barcelona,
\linebreak 08193 Bellaterra, Barcelona, Spain, and
Centre de Recerca Matem\`atica, Edifici Cc, Campus de Bellaterra,  08193 Cerdanyola del Vall\`es, Barcelona, Spain}
\email[]{francesc.perera@uab.cat}
\urladdr{https://mat.uab.cat/web/perera}
\author{Hannes Thiel}
\address{Hannes~Thiel, 
Department of Mathematical Sciences, Chalmers University of Technology and the University of Gothenburg, SE-412 96 Gothenburg, Sweden}
\email{hannes.thiel@chalmers.se}
\urladdr{www.hannesthiel.org}
\author{Eduard Vilalta}
\address{Eduard Vilalta, 
Department of Mathematical Sciences, Chalmers University of Technology and University of Gothenburg, SE-412 96 Gothenburg, Sweden}
\email[]{vilalta@chalmers.se}
\urladdr{www.eduardvilalta.com}
\thanks{
FP and EV were partially supported by MINECO (grants No.\  PID2023-147110NB-I00) and by the Comissionat per Universitats i Recerca de la Ge\-ne\-ralitat de Ca\-ta\-lu\-nya (grant No.\ 2021 SGR 01015). FP was also supported by the Spanish State Research Agency through the Severo Ochoa and María de Maeztu Program for Centers and Units of Excellence in R\&D (CEX2020-001084-M).
HT and EV were partially supported by the Knut and Alice Wallenberg Foundation (KAW 2021.0140).
}
\subjclass[2010]%
{Primary
46L05; 
Secondary
19K14, 
46L80, 
46L85. 
}
\keywords{$C^*$-algebras, pureness, Cuntz semigroups, comparison, divisibility}
\date{\today}
\begin{document}

\begin{abstract}
Given a closed ideal $I$ in a \ca{} $A$, we show that $A$ is pure if and only if $I$ and $A/I$ are pure.
More generally, we study permanence of comparison and divisibility properties when passing to extensions.

As an application we show that stable multiplier algebras of reduced free group \ca{s} are pure.
\end{abstract}

\maketitle

\section{Introduction}

Pureness is a regularity property for \ca{s} that plays a central role in the modern structure and classification theory of operator algebras. 
Introduced by Winter in the study of nuclear dimension and $\calZ$-stability of simple, nuclear \ca{s} \cite{Win12NuclDimZstable}, it captures two fundamental structural features: 
strict comparison of positive elements and almost divisibility in the Cuntz semigroup. 
These properties mirror the behavior of projections in $\mathrm{II}_1$ factors, and they can be regarded as a Cuntz semigroup-level analog of $\calZ$-stability.

The significance of $\mathcal{Z}$-stability -- meaning tensorial absorption of the Jiang–Su algebra $\mathcal{Z}$ -- was demonstrated by Toms’ example \cite{Tom08ClassificationNuclear} of two simple, nuclear \ca{s} with identical Elliott invariants but distinct regularity properties,  only one being $\mathcal{Z}$-stable. 
This led to a shift in the Elliott classification program, elevating $\mathcal{Z}$-stability to a central hypothesis.
R{\o}rdam \cite{Ror04StableRealRankZ} showed that $\calZ$-stability always implies pureness.
Conversely, the Toms-Winter conjecture \cite{Win18ICM} predicts in particular that simple, nuclear, pure \ca{s} are $\calZ$-stable.
This implication has been verified in several important cases (see, for example, \cite{Win12NuclDimZstable, Sat12arx:TraceSpace, KirRor14CentralSeq, TomWhiWin15ZStableFdBauer, Thi20RksOps}), but remains open in general.

Outside the nuclear setting, $\mathcal{Z}$-stability is typically absent, and pureness emerges as the more relevant notion of regularity.
Many natural classes of \ca{s} are known to be pure even though they are typically not $\calZ$-stable, including von Neumann algebras with no type~I summand, purely infinite \ca{s}, and reduced group \ca{s} of certain non-amenable groups \cite{AmrGaoElaPat24arX:StrCompRedGpCAlgs}. 
Understanding when pureness holds becomes therefore an important problem in \ca{} theory. 
Our main result establishes a fundamental permanence property for pureness:

\begin{thmIntro}[\ref{prp:PureExt}]
	\label{thmA}
Let $I$ be a closed ideal in a \ca{} $A$.
Then $A$ is pure if and only if~$I$ and~$A/I$ are pure.
\end{thmIntro}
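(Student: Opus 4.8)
The plan is to translate the statement into the Cuntz semigroup and argue entirely there. Pureness of a \ca{} $A$ is a property of its Cuntz semigroup $\Cu(A)$, namely almost unperforation (the Cuntz-semigroup form of strict comparison) together with almost divisibility. The short exact sequence $0 \to I \to A \to A/I \to 0$ induces, at the level of Cuntz semigroups, an identification of $\Cu(I)$ with a Cu-ideal $J \subseteq \Cu(A)$ together with a natural isomorphism $\Cu(A/I) \cong \Cu(A)/J$. First I would record this exactness, thereby reducing the theorem to a purely order-theoretic assertion: for a \CuSgp{} $S$ with a Cu-ideal $J$, the semigroup $S$ is almost unperforated and almost divisible if and only if both $J$ and $S/J$ are. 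I would then treat comparison and divisibility as separate permanence properties and prove each in both directions.

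The forward implication is the routine one. For the ideal the order and the way-below relation are inherited from $S$, and $J$ is hereditary and closed under suprema of increasing sequences; hence any witness to comparison or to almost divisibility produced in $S$ automatically lies in $J$ (for instance, if $ny \le x \in J$ then $y \le ny \le x$ forces $y \in J$). For the quotient the point is that $\pi \colon S \to S/J$ is a surjective \CuMor{}, so it preserves suprema and the way-below relation and one may push witnesses forward; the only genuine observation is for comparison, where from $(n+1)\pi(x)\le n\pi(y)$ one lifts to $(n+1)x' \le ny + j$ with $x'\ll x$ and $j \in J$, rewrites this as $(n+1)x' \le n(y+j)$ using $j \le nj$, and applies almost unperforation in $S$ to the pair $(x',y+j)$ to deduce $x' \le y+j$, that is, $\pi(x)\le\pi(y)$.

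The substance is the converse: assuming $J$ and $S/J$ are pure, deduce that $S$ is. My strategy is uniform for both properties --- \emph{solve the problem modulo $J$ in the quotient, correct the resulting ideal error using the property in $J$, and glue}. For comparison, given $(n+1)x \le ny$ I would pass to $S/J$, apply almost unperforation there to obtain $\pi(x)\le\pi(y)$, and unwind this to inequalities $x' \le y + j$ with $x' \ll x$ and $j \in J$; the quantitative slack in $(n+1)x\le ny$ (which survives as $(n+1)x' \le ny$) together with a decomposition of $x'$ relative to $y$ and $J$ and almost unperforation in $J$ should then let me absorb the ideal error $j$ and conclude $x' \le y$, whence $x \le y$. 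For divisibility, given $x$ and $x'\ll x$ I would first divide $\pi(x)$ in $S/J$, lift the resulting element to some $z \in S$, separately divide an appropriate element of $J$ accounting for the discrepancy between $x$ and the lift, and assemble the two halves into a single element of $S$ simultaneously satisfying $nz \le x$ and $x' \le (n+1)z$.

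The main obstacle is precisely the gluing. The quotient only ever yields information up to an uncontrolled ideal error, and reassembling a genuine inequality in $S$ forces one to juggle the way-below relation --- which demands $\varepsilon$-room and passage to $\ll$-increasing sequences --- simultaneously with the hereditary and absorption behaviour of the Cu-ideal $J$. Concretely, I expect the effort to concentrate in (i) a controlled lifting lemma producing elements of $S$ whose images in $S/J$ and whose interaction with $J$ are both prescribed up to $\ll$, and (ii) the interplay between functionals on $S$, on $J$, and on $S/J$ (every functional on $S$ restricts to one on $J$, and those vanishing on $J$ are exactly the functionals on $S/J$), which is what ultimately lets the ideal error be dominated once the quotient comparison has consumed the available slack. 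Making these arguments compatible with both the compact and the soft parts of the elements involved is where the care is needed.
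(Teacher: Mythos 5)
Your reduction to the Cuntz semigroup and your forward implication are sound and essentially match the paper (ideals inherit both properties directly; quotient comparison is handled by absorbing the ideal witness into $y$ exactly as you describe). The genuine gap is in the converse. You propose to show that almost unperforation ($0$-comparison) and almost divisibility ($0$-almost divisibility) each pass, individually and without any loss, from $J$ and $S/J$ to $S$. The paper explicitly does \emph{not} do this; whether strict comparison or almost divisibility pass to extensions of \ca{s} is posed there as open (\cref{qst:CompExt,qst:DivExt}). The step you describe as ``absorb the ideal error $j$ and conclude $x' \le y$'' is exactly the obstruction: once the quotient yields $x' \le y + j$ with $j \in J$, almost unperforation in $J$ only applies to pairs of elements of $J$, and there is no mechanism to dominate $j$ by (part of) the single element $y$ --- the multiplicative slack in $(n+1)x \le ny$ has already been consumed in passing to $S/J$. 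What the paper proves instead (\cref{prp:CompExtSep,prp:CompExt}) is a deliberately weaker, degraded statement: assuming $x <_s y_0$ and $x <_s y_1$, the quotient comparison gives $x \le y_0 + w$ with $w \in I$ an idempotent, and the \emph{second} witness $y_1$ is then used inside the ideal, via the infimum $x \wedge w$ and the generalized \CuMor{} $z \mapsto z \wedge w$, to get $x \wedge w \le y_1 \wedge w \le y_1$ and hence $x \le y_0 + y_1$. So an extension of $(0,0)$-pure \CuSgp{s} is only shown to be $(1,1)$-pure (\cref{prp:PureExtCu}), with an analogous loss of constants for divisibility (\cref{prp:DivExt}).

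This degradation is precisely why the proof of \cref{thmA} needs a second, C*-algebra-specific ingredient that is absent from your proposal: the reduction phenomenon of \cite[Theorem~5.7]{AntPerThiVil24arX:PureCAlgs}, which says that a \ca{} that is $(m,n)$-pure for some $m,n$ is automatically pure. Without it (or without resolving the open questions your sketch implicitly presupposes), the extension argument only yields that $\Cu(A)$ is $(1,1)$-pure, which does not conclude the theorem. Two further points your outline misses: the ideal-error bookkeeping in the paper (infima with idempotents, Edwards-type results) is only available for \emph{separable} \CuSgp{s}, which is why the paper first develops the separably-determined machinery and reduces to countably based sub-\CuSgp{s} via clubs; and functionals play no role in the paper's argument --- your hope in (ii) that the interplay of functionals on $S$, $J$, and $S/J$ will dominate the ideal error has no evident substance and, if it could be made to work for $0$-comparison, would again settle \cref{qst:CompExt}.
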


This is analogous to the result of Toms and Winter \cite[Theorem~4.3]{TomWin07ssa} that $\calZ$-stability passes to extensions of \ca{s}.

As an application of our main result, we show that stable multiplier algebras of reduced group \ca{s} can be pure, even when the underlying algebra is not $\mathcal{Z}$-stable.
For example, the reduced \ca{} of the free group  $\mathbb{F}_n$ fails to be $\calZ$-stable because its von Neumann envelope $L(\mathbb{F}_n)$ is a non-McDuff $\mathrm{II}_1$~factor, as shown by Ge in \cite[Theorem~3.3]{Ge98ApplFreeEntropy2}. 
This implies that the stabilization $C^*_{\mathrm{red}}(\mathbb{F}_n)\otimes\Cpct$ does not absorb the Jiang–Su algebra either. 
On the other hand, recent work of Amrutam, Gao, Kunnawalkam Elayavalli, and Patchell  \cite{AmrGaoElaPat24arX:StrCompRedGpCAlgs} shows that $C^*_{\mathrm{red}}(\mathbb{F}_n)\otimes\Cpct$ is pure.
Building on work of Kaftal, Ng, and Zhang \cite{KafNgZha17StrCompMultiplier} on the structure of corona algebras, we extend this to the multiplier setting:

\begin{exaIntro}[See \ref{exa:GroupCa}]
For each $n = 2,3,\ldots,\infty$, the stable multiplier algebra $M(C^*_{\mathrm{red}}(\mathbb{F}_n)\otimes\Cpct)$ of the reduced free group \ca{} $C^*_{\mathrm{red}}(\mathbb{F}_n)$ is pure.
\end{exaIntro}

This should be compared with the recent result of Farah and Szab\'{o} \cite{FarSza24arX:CoronaSSA}, which shows that the multiplier algebra of a $\sigma$-unital, $\calZ$-stable \ca{} is separably $\calZ$-stable and therefore pure. 

\medskip

A key ingredient in the proof of \cref{thmA} is the observation that pureness is a \emph{separably determined} property.
This concept, which we formalize in the following definition and develop further in the sequel, provides a general framework for verifying properties of nonseparable algebras by examining their separable subalgebras.

\begin{dfnIntro}[See \ref{dfn:SepDet}, \ref{rmk:SepDet}]
\label{dfn:SepDetIntro}
We say that a property $\calP$ of \ca{s} is \emph{separably determined} if:
\begin{enumerate}
\item[{\rm (1)}]
Given a \ca{} $A$ that has $\calP$, every separable sub-\ca{} of $A$ is contained in a separable sub-\ca{} of $A$ that has $\calP$; and
\item[{\rm (2)}]
$\calP$ passes to (not necessarily sequential) inductive limits.
\end{enumerate}
\end{dfnIntro}

\begin{thmIntro}[See \ref{prp:PureSepDet}, \ref{prp:CharPureness}]
Pureness of \ca{s} is separably determined.

In particular, a \ca{} is pure if and only if every separable sub-\ca{} is contained in a separable, pure sub-\ca{}.
\end{thmIntro}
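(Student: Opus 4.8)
The plan is to verify the two defining conditions of separable determination for the property $\calP$ of being pure; the final characterization will then follow formally. Throughout I would replace pureness by its equivalent formulation at the level of the Cuntz semigroup: $A$ is pure precisely when $\Cu(A)$ is almost unperforated (i.e.\ $(n+1)x\le ny$ implies $x\le y$) and almost divisible. I would moreover use the \emph{local} versions of these two properties, phrased through the way-below relation $\ll$, since these are the versions that interact well with inductive limits and with approximation inside $\Cu$.

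For condition (2), that pureness passes to an arbitrary inductive limit $A=\varinjlim_i A_i$, I would invoke that the functor $\Cu$ preserves inductive limits, so that $\Cu(A)=\varinjlim_i\Cu(A_i)$ in the category $\mathrm{Cu}$. The engine of the argument is that any way-below pair in the limit lifts, up to an arbitrarily small perturbation from below, to a configuration at some finite stage. Concretely, given $(n+1)x\le ny$ in $\Cu(A)$ and $x'\ll x$, I would lift $x'$ and a suitable under-approximant of $y$ to a stage $i$ at which the relevant inequality is witnessed for the lifts, apply almost unperforation in $\Cu(A_i)$ to conclude there, and push the conclusion forward; letting $x'\ll x$ vary and taking suprema yields $x\le y$. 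An entirely analogous lifting argument -- produce the divisor at a finite stage using almost divisibility of $\Cu(A_i)$, then push it forward -- gives almost divisibility of $\Cu(A)$.

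Condition (1) is the substantial part and, I expect, the main obstacle. Given a pure $A$ and a separable sub-\ca{} $C\subseteq A$, I would construct an increasing sequence $C=C_0\subseteq C_1\subseteq\cdots$ of separable sub-\ca{s} by a L\"owenheim--Skolem type exhaustion, and set $D=\overline{\bigcup_k C_k}$. At stage $k$ I would enumerate countably many tasks over a countable dense subset of $C_k$: for every pair $a,b$ there and every $n\in\NN$ for which $(n+1)\langle a\rangle\le n\langle b\rangle$ holds in $\Cu(A)$, pureness of $A$ gives $\langle a\rangle\le\langle b\rangle$ in $\Cu(A)$, i.e.\ elements $r\in A$ with $(a-\varepsilon)_+=rbr^*$; and for every $a$ and $n$, almost divisibility of $A$ gives a divisor $c\in A$ together with witnesses for $n\langle c\rangle\le\langle a\rangle\le(n+1)\langle c\rangle$. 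I would place all of these witnesses -- for a countable dense set of $\varepsilon$ -- into $C_{k+1}$. The delicate point is that the functorial map $\Cu(D)\to\Cu(A)$ induced by the inclusion need not be injective, so knowing the relations in $A$ is not enough; they must hold \emph{intrinsically} in $D$. Collecting the witnesses is exactly what secures this: a hypothesis valid in $\Cu(D)$ maps forward to a valid hypothesis in $\Cu(A)$ along the order-preserving map, pureness of $A$ produces witnesses, and since the witnesses were placed in $D$ the corresponding conclusion is realized in $\Cu(D)$ itself. Elements of $D$ not appearing at any finite stage are handled through the $\ll$-formulations, approximating them from below by elements coming from the $C_k$, for which the tasks have been resolved.

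Finally, the \emph{in particular} statement follows formally from the two conditions. The forward implication is precisely condition (1). For the converse, suppose every separable sub-\ca{} of $A$ lies in a separable pure sub-\ca{}. Then the family of separable pure sub-\ca{s} of $A$ is upward directed, since the sub-\ca{} generated by two of them is separable and hence sits inside a third, and it exhausts $A$, since every element generates a separable sub-\ca{} contained in a pure one. Thus $A$ is the inductive limit of its separable pure sub-\ca{s}, and condition (2) shows that $A$ is pure.
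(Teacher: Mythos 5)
Your proposal is correct in outline, but it takes a genuinely different route from the paper on the substantial half, the L\"{o}wenheim--Skolem condition. The paper never runs a bootstrap at the \ca{} level: it first proves that $m$-comparison and $n$-almost divisibility are separably determined as properties of abstract \CuSgp{s} (\cref{prp:CompSepDet,prp:DivSepDet}), where comparison passes to sub-\CuSgp{s} for free and only divisibility needs a witness-collection exhaustion, carried out entirely inside the semigroup. It then transfers to \ca{s} by a general principle (\cref{prp:RelateSepDet}): if a property of \CuSgp{s} is separably determined, then so is the property of having a Cuntz semigroup with it. The transfer rests on \cref{prp:RelateClubs} (from \cite[Proposition~6.1]{ThiVil21DimCu2}): the separable sub-\ca{s} $B \subseteq A$ for which the induced map $\Cu(B)\to\Cu(A)$ is an order-embedding form a club in $\Sep(A)$. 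That club is exactly the device that disposes of the difficulty you correctly single out, namely that $\Cu(D)\to\Cu(A)$ need not be an order-embedding; your witness-collection scheme is in effect a hands-on proof of a special case of this fact. The paper's route buys reusability --- the same club machinery handles \axiomO{5}--\axiomO{8} and is used again in the extension arguments of \cref{sec:CompDivExt} --- and avoids all R{\o}rdam-lemma bookkeeping; your route is self-contained at the \ca{} level and makes the witnesses explicit. Your inductive-limit half and your formal derivation of the ``in particular'' statement agree with the paper's (\cref{prp:CompApprox,prp:DivApprox,prp:ClubWithProperty}).

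Two points in your sketch need genuine repair before it is a proof. First, Cuntz comparison in $\Cu(D)$ concerns positive elements of $D\otimes\Cpct$, not of $D$: your tasks must range over a countable dense subset of $(C_k\otimes\Cpct)_+$, and the comparison witnesses $r$ --- and, for divisibility, the divisor $c$ itself --- live in $A\otimes\Cpct$, so they cannot literally be ``placed into $C_{k+1}$''; one must approximate them by finite sums of elementary tensors and adjoin the $A$-legs to $C_{k+1}$, so that the witnesses end up in $C_{k+1}\otimes\Cpct$. Second, the divisibility task you state, $n\langle c\rangle\le\langle a\rangle\le(n+1)\langle c\rangle$, is the global form, which is strictly stronger than the correct local one ($ky\leq x$ and $x'\leq (k+1)y$ for $x'\ll x$) and is not what pureness of $A$ provides; moreover, since hypotheses only transfer to norm-approximants after cutting down on \emph{both} sides (R{\o}rdam's lemma), the tasks must be indexed by cut-downs $(a'-\varepsilon)_+$, $(b'-\delta)_+$ with rational $\varepsilon,\delta$ appearing on the small \emph{and} the large side of each relation --- e.g.\ in the divisibility step one must divide $\langle(a'-\varepsilon)_+\rangle\ll\langle(a'-\varepsilon')_+\rangle$ rather than $\ll\langle a'\rangle$, since $\langle a'\rangle\leq\langle a\rangle$ generally fails for an approximant $a'$ of $a$. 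Both repairs are standard, but as written the construction does not yield almost unperforation or almost divisibility of $\Cu(D)$.
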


The combination of this separability reduction and an abstract analysis of comparison and divisibility for extensions in the setting of abstract Cuntz semigroups allows us to lift pureness across extensions.
While it is not clear that strict comparison and almost divisibility are individually preserved under extensions (\cref{qst:CompExt,qst:DivExt}), we show that their combined quantified versions, known as $(m,n)$-pureness (introduced in \cite{Win12NuclDimZstable}), behave well in this setting.
Specifically, we show that if an abstract Cuntz semigroup $S$ contains an ideal $I$ such that both~$I$ and~$S/I$ are $(m,n)$-pure for some $m,n$, then $S$ is $(m',n')$-pure for explicit values of $m'$ and $n'$ depending on $m$ and $n$ (\cref{prp:PureExtCu}).
In particular, an extension of pure \ca{s} is always $(1,1)$-pure, and hence pure by the reduction phenomenon established in \cite{AntPerThiVil24arX:PureCAlgs}.

\medskip

The paper is organized as follows. 
In \cref{sec:SepDet}, we introduce and develop the notion of separably determined properties, for both \ca{s} and Cuntz semigroups. 
In \cref{sec:CompDivSepDet}, we show that comparison and divisibility are separably determined and deduce that pureness is as well. 
In \cref{sec:CompDivExt}, we analyze how these properties behave in extensions of Cuntz semigroups and prove that pureness passes to extensions of \ca{s}. 
We conclude with applications to stable multiplier algebras and group \ca{s}.

\subsection*{Acknowledgements}

The second-named author thanks Ilijas Farah for helpful discussions on axiomatizable properties and the L\"{o}wenheim–Skolem condition, and Ping Ng for stimulating conversations regarding the structure of corona algebras.

\section{Separably determined properties of C*-algebras and Cu-semigroups}
\label{sec:SepDet}

In this section we introduce the concept of being `separably determined' for properties of \ca{s} and \CuSgp{s}.
This notion is very useful for reducing arguments to the separable case, and this will be crucial in \cref{sec:CompDivExt}.
Many everyday properties of \ca{s} --- for example, stable rank one --- are separably determined. 
Moreover, if $\calP$ is a separably determined property of \CuSgp{s}, then the property of having a Cuntz semigroup with $\calP$ is separably determined for \ca{s};
see \cref{prp:RelateSepDet}.

\begin{pgr}
Let $A$ be a \ca.
We denote by $\Sep(A)$ the collection of separable sub-\ca{s} of $A$, equipped with the partial order given by inclusion.
Every countable, directed subset $\calC \subseteq \Sep(A)$ admits a supremum, namely $\overline{\bigcup_{B \in \calC}B}$.
Restricting the model-theoretic definition to our case of interest, a family $\calC \subseteq \Sep(A)$ is called a \emph{club} if it is \emph{cofinal} (that is, for every $B_0\in\Sep(A)$ there exists $B\in\calC$ such that $B_0 \subseteq B$) and \emph{$\sigma$-complete} (that is, for every countable, directed subset $\calC' \subseteq \calC$ we have $\sup\calC' \in \calC$).

Let $\calP$ be a property of \ca{s} (for example, `has stable rank one').
One says that $\calP$ satisfies the \emph{L{\"o}wenheim-Skolem condition}, or that $\calP$ \emph{reflects to separable sub-\ca{s}} (\cite[Definition~7.3.1]{Far19BookSetThyCAlg}), if every \ca{} with $\calP$ contains a club of separable sub-\ca{s} that also have $\calP$.
\end{pgr}

In the following definition, an inductive limit refers to the limit of an inductive system with not necessarily injective connecting maps and indexed by a directed set, which is not necessarily countable.

\begin{dfn}
\label{dfn:SepDet}
We say that a property $\calP$ of \ca{s} is \emph{separably determined} if it satisfies the L{\"o}wenheim-Skolem condition and is preserved by inductive limits.
\end{dfn}

\begin{rmk}
\label{rmk:SepDet}
Let $\calP$ be a property of \ca{s} that passes to inductive limits.
Given a \ca{} $A$, let $\Sep_{\calP}(A)$ denote the family of separable sub-\ca{s} that have $\calP$.
We note that $\Sep_{\calP}(A)$ is automatically $\sigma$-complete:
Indeed, if $\calC' \subseteq \Sep_{\calP}(A)$ is a countable, directed subset, then $\calC'$ is an inductive system indexed over itself whose inductive limit is $\sup\calC'$, and since $\calP$ passes to inductive limits, we get $\sup\calC' \in \Sep_{\calP}(A)$.

Thus, to ensure that~$\calP$ satisfies the L{\"o}wenheim-Skolem condition (and thus is separably determined), it suffices to assume that $\Sep_{\calP}(A)$ is cofinal whenever $A$ has $\calP$.
This shows that \cref{dfn:SepDetIntro} agrees with \cref{dfn:SepDet}.
\end{rmk}

\begin{prp}
\label{prp:ClubWithProperty}
Let $\calP$ be a separably determined property of \ca{s}.
Then the following are equivalent for every \ca{} $A$:
\begin{enumerate}
\item[{\rm (1)}]
The \ca{} $A$ has $\calP$.
\item[{\rm (2)}]
The family of separable sub-\ca{s} of $A$ that have $\calP$ is a club.
\item[{\rm (3)}]
The \ca{} $A$ contains a club of separable sub-\ca{s} that have $\calP$. 
\end{enumerate}
\end{prp}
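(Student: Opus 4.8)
The plan is to prove the cycle of implications $(1) \Rightarrow (2) \Rightarrow (3) \Rightarrow (1)$. Throughout I write $\Sep_{\calP}(A)$ for the family of separable sub-\ca{s} of $A$ that have $\calP$, as in \cref{rmk:SepDet}.

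For $(1) \Rightarrow (2)$, I would use both defining features of a separably determined property. Since $A$ has $\calP$, the L\"owenheim-Skolem condition provides a club $\calC$ of separable sub-\ca{s} of $A$ that have $\calP$, so that $\calC \subseteq \Sep_{\calP}(A)$. Cofinality of $\calC$ then forces cofinality of the larger family $\Sep_{\calP}(A)$. On the other hand, because $\calP$ passes to inductive limits, \cref{rmk:SepDet} already shows that $\Sep_{\calP}(A)$ is $\sigma$-complete. Being both cofinal and $\sigma$-complete, $\Sep_{\calP}(A)$ is a club. The implication $(2) \Rightarrow (3)$ is then immediate, as $\Sep_{\calP}(A)$ is itself a club witnessing $(3)$.

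The main point is $(3) \Rightarrow (1)$, where I want to recover $\calP$ for $A$ from a club $\calC$ of separable sub-\ca{s} that have $\calP$. The key observation is that the cofinality of a club makes it a directed system exhausting $A$: given $B_1, B_2 \in \calC$, the separable algebra $C^*(B_1 \cup B_2)$ is dominated by some member of $\calC$, so $\calC$ is upward directed under inclusion; and for each $a \in A$, the separable algebra $C^*(a)$ is contained in some $B \in \calC$, whence $\bigcup_{B \in \calC} B = A$. Consequently $A = \overline{\bigcup_{B \in \calC} B}$ is the inductive limit of the directed system $(\calC, \subseteq)$ with inclusion connecting maps. Since every member of $\calC$ has $\calP$ and $\calP$ is preserved by inductive limits, $A$ has $\calP$.

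I expect no serious obstacle, since the statement is largely an unwinding of the definitions; the step requiring the most care is $(3) \Rightarrow (1)$. There one must check that a cofinal family is genuinely directed and that the inclusion system it defines---possibly indexed by an uncountable directed set---has inductive limit equal to $A$ on the nose, both of which follow from cofinality. I note that $\sigma$-completeness plays no role in this direction, cofinality alone suffices; it is only in $(1) \Rightarrow (2)$ that the full club structure, together with \cref{rmk:SepDet}, is needed.
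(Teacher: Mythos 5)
Your proof is correct and follows essentially the same route as the paper: (1)$\Rightarrow$(2) via the L\"owenheim--Skolem condition for cofinality plus \cref{rmk:SepDet} for $\sigma$-completeness, and (3)$\Rightarrow$(1) by viewing the club as an inductive system of subalgebras whose limit is $A$. The only difference is that you spell out the details the paper leaves implicit -- that cofinality makes the club directed and that its union exhausts $A$ -- which is a worthwhile check but not a different argument.
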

\begin{proof}
To show that~(1) implies~(2), assume that $A$ has $\calP$ and let $\Sep_{\calP}(A)$  be the family of separable sub-\ca{s} of $A$ that have $\calP$.
Since $\calP$ satisfies the L{\"o}wenheim-Skolem condition, $\calC$ contains a club and consequently is cofinal.
Further, $\Sep_{\calP}(A)$ is automatically $\sigma$-complete, as noted in \cref{rmk:SepDet}.

Clearly, (2) implies~(3).
To see that~(3) implies~(1), let $\calC \subseteq \Sep(A)$ be a club of separable sub-\ca{s} that have $\calP$.
Viewing $\calC$ as an inductive system indexed over itself, its limit is (isomorphic to) $A$, and since $\calP$ passes to inductive limits it follows that $A$ has $\calP$.
\end{proof}

\begin{rmk}
The Downwards L{\"o}wenheim-Skolem theorem (\cite[Theorem~7.1.9, Lemma~7.3.3]{Far19BookSetThyCAlg}) shows that every axiomatizable property satisfies the L{\"o}wenheim-Skolem condition.
Further, every $\forall\exists$-axiomatizable property passes to inductive limits;
see \cite[Proposition~2.4.4(3)]{FarHarLupRobTikVigWin21ModelThy}.
It follows that every $\forall\exists$-axiomatizable property is separably determined, and \cite[Theorem 2.5.1]{FarHarLupRobTikVigWin21ModelThy} lists many such properties.

On the other hand, there are some natural properties of \ca{s} that are separably determined but that are not axiomatizable (or at least not known to be axiomatizable).
For example, the property of having real rank $\leq 1$ is separably determined (see the discussion after \cite[Definition~4.5]{Thi24RRExt}), but it remains open of this property is axiomatizable \cite[Question~3.9.3]{FarHarLupRobTikVigWin21ModelThy}.
\end{rmk}

\begin{rmk}
Following Blackadar \cite[Definition~II.8.5]{Bla06OpAlgs}, one says that a property~$\calP$ of \ca{s} is \emph{separably inheritable} if for every \ca{} $A$ with $\calP$ the separable sub-\ca{s} with $\calP$ are cofinal in $\Sep(A)$ and $\calP$ passes to sequential inductive limits with injective connecting maps.
It thus follows from the definitions that every separably determined property is separably inheritable.
It is also known that every separably inheritable property satisfies the L{\"o}wenheim-Skolem condition.
The implications are shown in the following diagram:
\begin{center}
\begin{tikzcd}[column sep=10pt,row sep=12pt,arrows=Rightarrow]
\text{$\forall\exists$-axiomatizable} \ar[r] & \text{separably determined} \ar[d] \\
& \text{separably inheritable} \ar[r] & \text{L{\"o}wenheim-Skolem condition}
\end{tikzcd}
\end{center}
\end{rmk}

\begin{pgr}
Let us recall the basic theory of Cuntz semigroups and their abstract counterparts.
A \emph{\CuSgp{}} is a positively ordered monoid such that
\begin{itemize}
\item[\axiomO{1}] 
increasing sequences have suprema;
\item[\axiomO{2}]
every element is the supremum of a $\ll$-increasing sequence;
\item[\axiomO{3}] 
addition is compatible with the relation $\ll$;
\item[\axiomO{4}] 
addition is compatible with taking suprema of increasing sequences;
\end{itemize}
where $\ll$ denotes the \emph{way-below relation} defined by setting $x\ll y$ if for every increasing sequence $(y_n)_n$ with $y \leq \sup_n y_n$ there exists $n\in\mathbb{N}$ such that $x\leq y_n$.

A \emph{generalized \CuMor} is an order-preserving monoid morphism that preserves suprema of increasing sequences.
If it also preserves the way-below relation then it is called a \emph{\CuMor}.

The category of \CuSgp{s} was introduced in \cite{CowEllIva08CuInv} to provide an abstract framework for the study of the \emph{Cuntz semigroup} of a \ca{} $A$. 
This invariant, denoted by $\Cu(A)$, is defined as the Cuntz equivalence classes of positive elements in the stabilization~$A\otimes\Cpct$.
Here, the Cuntz subsequivalence relation $\precsim$ and the Cuntz equivalence relation $\sim$ among positive elements $a$ and $b$ in a \ca{} are defined by setting $a\precsim b$ if there exists a sequence $(r_n)_n$ such that $a=\lim_n r_n br_n^*$, and by setting $a \sim b$ if $a\precsim b$ and $b \precsim a$.

When equipped with the partial order induced by $\precsim$ and the addition induced by addition of orthogonal positive elements in $A\otimes\Cpct$, the Cuntz semigroup becomes a \CuSgp{}; 
see \cite[Theorem 1]{CowEllIva08CuInv}.
Further, every $\ast$-homomorphism $A \to B$ between \ca{s} induces a natural \CuMor{} $\Cu(A)\to\Cu(B)$, and the Cuntz semigroup functor preserves inductive limits (\cite[Corollary 3.2.9]{AntPerThi18TensorProdCu}) and taking scales into account it also preserves (ultra)products (\cite[Theorem~7.5]{AntPerThi20CuntzUltraproducts}).~See also \cite{AraPerTom11Cu, GarPer23arX:ModernCu} for general expositions. 

A basis of a \CuSgp{} $S$ is a subset $B \subseteq S$ such that every element in $S$ can be written as the supremum of an increasing sequence of elements in $B$.
One says that $S$ is \emph{separable} or \emph{countably based} if it contains a countable basis.
The Cuntz semigroup of a separable \ca{} is separable;
see \cite[Lemma~1.3]{AntPerSan11PullbacksCu}.
\end{pgr}

\begin{pgr}
Let $S$ be a \CuSgp.
As defined in \cite[Definition~4.1]{ThiVil21DimCu2}, a submonoid~$H$ of~$S$ is called a sub-\CuSgp{} if $H$ is a \CuSgp{} with the induced order and the inclusion map $H\to S$ is a \CuMor{}. 
We denote by $\Sep(S)$ the family of all separable (=countably based) sub-\CuSgp{s} of $S$.

Given a countable, directed subset $\calD \subseteq \Sep(S)$, the union $M:=\bigcup\calD=\bigcup_{T\in\calD} T$ is a submonoid of~$S$ such that every element in $M$ is the supremum (in $S$) of a $\ll$-increasing sequence of elements in $M$.
Therefore, the set
\[
\overline{M} := \left\{ \sup_n x_n \in S : (x_n)_n \text{ is an increasing sequence of elements in } M \right\}
\]
is a sub-\CuSgp{} by \cite[Corollary~4.12]{ThiVil21DimCu2}.
Using that $\calD$ is countable and that each $T \in \calD$ is separable, it follows that $\overline{M}$ is separable.
Then $\overline{M}$ is the supremum of $\calD$ in $\Sep(S)$.

As in the setting of \ca{s}, we define a family $\calD \subseteq \Sep(S)$ to be a \emph{club} if
\begin{enumerate}
\item[{\rm (i)}]  
$\calD$ is cofinal, that is, for every $T_0 \in \Sep(S)$ there exists $T \in \calD$ such that $T_0 \subseteq T$; and
\item[{\rm (ii)}] 
$\calD$ is $\sigma$-complete, that is, for every countable, directed subset $\calD'\subseteq\calD$ we have $\sup\calD'\in\calD$.
\end{enumerate}

Let $\calP$ be a property for \CuSgp{s} (for example, `has $0$-comparison').
We say that $\calP$ satisfies the \emph{L{\"o}wenheim-Skolem condition}, or that $\calP$ \emph{reflects to separable sub-\CuSgp{s}}, if every \CuSgp{} with $\calP$ contains a club of separable sub-\CuSgp{s} with $\calP$;
see \cite[Paragraph~5.2]{ThiVil21DimCu2}.
\end{pgr}

Analogous to \cref{dfn:SepDet}, we define:

\begin{dfn}
\label{dfn:SepDetCu}
We say that a property $\calP$ of \CuSgp{s} is \emph{separably determined} if it satisfies the L{\"o}wenheim-Skolem condition and passes to inductive limits.
\end{dfn}

The following result follows a similar proof as in \cref{prp:ClubWithProperty}:

\begin{prp}
\label{prp:ClubWithPropertyCu}
Let $\calP$ be a separably determined property of \CuSgp{s}. Then, for every \CuSgp{} $S$, we have that $S$ has $\calP$ if, and only if, the family of separable sub-\CuSgp{s} of $S$ that have $\calP$ is a club.
\end{prp}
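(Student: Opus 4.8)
The plan is to mirror the proof of \cref{prp:ClubWithProperty} exactly, transporting each step from the \ca{} setting to the \CuSgp{} setting. The statement is logically a biconditional, so I would prove each direction separately, and the key observation that makes this routine is that the ingredients used for \ca{s} all have verbatim analogues here: we have a notion of $\Sep(S)$ equipped with inclusion, a notion of club (cofinal plus $\sigma$-complete), and, crucially, the fact that a $\sigma$-complete directed family has suprema computed as the closure $\overline{M}$ of the union, which is itself a separable sub-\CuSgp{} by \cite[Corollary~4.12]{ThiVil21DimCu2}.

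\begin{proof}
Suppose first that $S$ has $\calP$, and let $\Sep_{\calP}(S)$ denote the family of separable sub-\CuSgp{s} of $S$ that have $\calP$. Since $\calP$ satisfies the L{\"o}wenheim-Skolem condition, $S$ contains a club of separable sub-\CuSgp{s} with $\calP$; in particular $\Sep_{\calP}(S)$ is cofinal. To see that $\Sep_{\calP}(S)$ is $\sigma$-complete, let $\calD' \subseteq \Sep_{\calP}(S)$ be a countable, directed subset. Then $\calD'$ is an inductive system indexed over itself whose inductive limit is $\sup\calD'$, and since $\calP$ passes to inductive limits we obtain $\sup\calD' \in \Sep_{\calP}(S)$. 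Hence $\Sep_{\calP}(S)$ is a club.

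Conversely, suppose that $\Sep_{\calP}(S)$ is a club. In particular it is nonempty and, being cofinal, it is directed (any two elements are contained in a common larger one). Viewing $\Sep_{\calP}(S)$ as an inductive system indexed over itself, its inductive limit is (isomorphic to) $S$, since the family is cofinal in $\Sep(S)$ and every element of $S$ is the supremum of a $\ll$-increasing sequence lying in the union $\bigcup_{T \in \Sep_{\calP}(S)} T$. As $\calP$ passes to inductive limits and every $T \in \Sep_{\calP}(S)$ has $\calP$, it follows that $S$ has $\calP$.
\end{proof}

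The main point where I would slow down, and the step I expect to be the only genuine subtlety, is verifying in the converse direction that $S$ really is the inductive limit of the club $\Sep_{\calP}(S)$. In the \ca{} case this rests on $\overline{\bigcup_{B \in \calC} B} = A$ by cofinality; here the corresponding fact is that the closure $\overline{M}$ of the union of a cofinal family recovers all of $S$, which uses axiom \axiomO{2} (every element is a supremum of a $\ll$-increasing sequence) together with cofinality to place the approximating sequence inside the union. I would cite the construction of $\sup\calD$ via $\overline{M}$ from the paragraph preceding \cref{dfn:SepDetCu}, since it already packages precisely the closure-of-union argument needed. Everything else is formal and parallels \cref{prp:ClubWithProperty} line by line, which is why the statement only records the ``club'' equivalence and omits the third, purely formal equivalent condition appearing there.
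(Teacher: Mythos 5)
Your proof is correct and takes exactly the route the paper intends: the paper's own ``proof'' is merely the remark that the argument of \cref{prp:ClubWithProperty} transfers to \CuSgp{s}, which is precisely what you carry out, including correctly identifying the one genuine subtlety (that $S$ is recovered as the inductive limit of the club). The only micro-step worth making explicit in the converse direction is that placing the $\ll$-increasing sequence provided by \axiomO{2} inside a single club member requires first enclosing that countable set in a separable sub-\CuSgp{} (\cite[Lemma~5.1]{ThiVil21DimCu2}), since cofinality applies to sub-\CuSgp{s} rather than to arbitrary countable subsets.
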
	

The next result closely relates clubs of separable sub-\ca{s} of a \ca{} with clubs of separable sub-\CuSgp{s} of its Cuntz semigroup.

\begin{lma}
\label{prp:RelateClubs}
Let $A$ be a \ca.
Then the collection $\calC_0$ of separable sub-\ca{s} $B \subseteq A$ such that the inclusion $B \to A$ induces an order-embedding $\Cu(B) \to \Cu(A)$ is a club in $\Sep(A)$.

Further, the family $\calD_0$ formed by the induced monoids $\Cu (B)$ ---when viewed as sub-\CuSgp{s} of $\Cu (A)$--- is a club in $\Sep(\Cu(A))$, and the induced map $\alpha\colon\calC_0\to\calD_0$ preserves the order and suprema of countable, directed sets.

If $\calD \subseteq \Sep(\Cu(A))$ is a club, then so is $\alpha^{-1}(\calD_0 \cap \calD) \subseteq \Sep(A)$.
\end{lma}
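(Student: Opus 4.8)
The plan is to prove the final assertion by taking the first two assertions for granted: they tell us that $\calC_0$ is a club in $\Sep(A)$ (in particular cofinal and $\sigma$-complete) and that $\alpha\colon\calC_0\to\calD_0$ is order-preserving and preserves suprema of countable, directed sets. First I would unwind the set in question. Since $\alpha(B)=\Cu(B)\in\calD_0$ for every $B\in\calC_0$, the condition $\alpha(B)\in\calD_0\cap\calD$ reduces to $\Cu(B)\in\calD$, so $\alpha^{-1}(\calD_0\cap\calD)=\{B\in\calC_0:\Cu(B)\in\calD\}$; write $\calE$ for this family. It then suffices to show that $\calE$ is cofinal and $\sigma$-complete in $\Sep(A)$.

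The $\sigma$-completeness is the quick half, and it does not use cofinality of $\calD$ at all. Given a countable, directed $\calC'\subseteq\calE$, its supremum $B=\sup\calC'$ lies in $\calC_0$ because $\calC_0$ is $\sigma$-complete. Since $\alpha$ preserves suprema of countable, directed sets, $\Cu(B)=\sup_{C\in\calC'}\Cu(C)$, and $\{\Cu(C):C\in\calC'\}$ is a countable, directed subset of $\calD$ (directed because $\alpha$ is order-preserving). As $\calD$ is $\sigma$-complete, this supremum lies in $\calD$, whence $\Cu(B)\in\calD$ and $B\in\calE$.

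The substantive part is cofinality, which I would establish by a back-and-forth between the two clubs $\calC_0$ and $\calD$. Fix $B_0\in\Sep(A)$ and, using cofinality of $\calC_0$, choose $C_0\in\calC_0$ with $B_0\subseteq C_0$. Then I alternate: given $C_k\in\calC_0$, cofinality of $\calD$ yields $D_k\in\calD$ with $\Cu(C_k)\subseteq D_k$; next, I fix a countable basis of the separable \CuSgp{} $D_k$, represent its elements by positive elements of $A\otimes\Cpct$, adjoin the (countably many) matrix entries of these representatives to $C_k$, and use cofinality of $\calC_0$ to enlarge the resulting separable algebra to some $C_{k+1}\in\calC_0$ with $C_k\subseteq C_{k+1}$. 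Because $C_{k+1}\in\calC_0$, the image $\Cu(C_{k+1})$ is a sub-\CuSgp{} of $\Cu(A)$; it contains a basis of $D_k$ and is closed under suprema of increasing sequences, so it contains $D_k$. Thus $\Cu(C_k)\subseteq D_k\subseteq\Cu(C_{k+1})\subseteq D_{k+1}$, which automatically forces both chains $(\Cu(C_k))_k$ and $(D_k)_k$ to increase and to be mutually cofinal. Note that this last point removes any need to take joins inside $\Sep(\Cu(A))$ to keep $(D_k)_k$ increasing.

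Finally I would close up the construction. Set $B:=\sup_k C_k=\overline{\bigcup_k C_k}$; by $\sigma$-completeness of $\calC_0$ we have $B\in\calC_0$, and since $\alpha$ preserves suprema of countable, directed sets, $\Cu(B)=\sup_k\Cu(C_k)=\sup_k D_k$, using mutual cofinality of the two chains. The right-hand side is the supremum of an increasing sequence in $\calD$, hence lies in $\calD$ by $\sigma$-completeness of $\calD$; therefore $\Cu(B)\in\calD$, i.e. $B\in\calE$, and $B_0\subseteq C_0\subseteq B$. The main obstacle is orchestrating this interleaving so that the two suprema coincide and land in $\calD$: the two delicate inputs are the realizer step, where one checks that adjoining representatives of a basis of $D_k$ forces $D_k\subseteq\Cu(C_{k+1})$ (using that a sub-\CuSgp{} is closed under suprema of increasing sequences of basis elements), and the identity $\Cu(B)=\sup_k\Cu(C_k)$, which is precisely the supremum-preservation of $\alpha$ furnished by the second assertion. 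Granting these, $\sigma$-completeness of both $\calC_0$ and $\calD$ makes the back-and-forth close up.
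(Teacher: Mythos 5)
Your proposal is correct, and its skeleton is the same as the paper's: $\sigma$-completeness of $\alpha^{-1}(\calD_0\cap\calD)$ via the supremum-preservation of $\alpha$, and cofinality via an interleaved chain $\Cu(C_0)\subseteq D_0\subseteq \Cu(C_1)\subseteq D_1\subseteq\cdots$ that is closed up using $\sigma$-completeness of $\calC_0$ and of $\calD$ together with $\alpha(\sup_k C_k)=\sup_k\Cu(C_k)=\sup_k D_k$. Where you genuinely diverge is in how the containment $D_k\subseteq\Cu(C_{k+1})$ is produced. The paper first intersects: it invokes the set-theoretic fact that the intersection of two clubs is again a club, so its interleaving runs through $H_n\in\calD\cap\calD_0$, i.e.\ through semigroups that are \emph{already} of the form $\Cu(C)$ with $C\in\calC_0$; the induction step is then purely formal (pull $H_n$ back along $\alpha$ and use cofinality of $\calC_0$ to find $B_n\in\calC_0$ containing $B_{n-1}$ and the pullback, so that order-preservation of $\alpha$ gives $H_n\subseteq\Cu(B_n)$). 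You instead let $D_k\in\calD$ be arbitrary and realize it inside $\Cu(C_{k+1})$ by hand: lift a countable basis of $D_k$ to positive elements of $A\otimes\Cpct$, adjoin their matrix entries to $C_k$, enlarge within $\calC_0$, and use that the image $\Cu(C_{k+1})$, being a sub-\CuSgp{} of $\Cu(A)$, is closed under suprema in $\Cu(A)$ of increasing sequences. This is sound (the matrix-entry step works because a positive element of $A\otimes\Cpct$ lies in $C\otimes\Cpct$ as soon as $C$ contains all of its matrix entries, and the absorption argument correctly uses that the inclusion $\Cu(C_{k+1})\to\Cu(A)$ preserves sequential suprema), and it buys you independence from the intersection-of-clubs lemma at the price of concrete $C^*$-algebraic work. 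One remark: given what you granted yourself, the realization step is actually unnecessary. Since you assume $\calD_0$ is a club, hence cofinal in $\Sep(\Cu(A))$, you could pick $H\in\calD_0$ with $D_k\subseteq H$, write $H=\Cu(C')$ with $C'\in\calC_0$, and choose $C_{k+1}\in\calC_0$ containing $C_k$ and $C'$; order-preservation of $\alpha$ then yields $D_k\subseteq H\subseteq\Cu(C_{k+1})$ with no mention of matrix entries, which brings your argument essentially back to the paper's.
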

\begin{proof}
It was shown in \cite[Proposition~6.1]{ThiVil21DimCu2} that $\calC_0$ and $\calD_0$ are clubs and that $\alpha\colon\calC_0\to\calD_0$ preserves the order and suprema of countable, directed sets.
We will repeatedly use that the intersection of finitely many (even at most countably many) clubs is again a club;
see \cite[Proposition~6.2.9]{Far19BookSetThyCAlg}.

Let $\calD \subseteq \Sep(\Cu(A))$ be a club, and set $\calC := \alpha^{-1}(\calD_0 \cap \calD)$.
We need to show that $\calC$ is a club.
Using that $\alpha$ preserves suprema of countable, directed sets, we deduce that $\calC$ is $\sigma$-complete.

To show that~$\calC$ is cofinal, let $B_0\in \Sep(A)$.
Since $\calC_0$ is a club (and thus cofinal), we may assume that $B_0 \in \calC_0$.
Since $\calD\cap\calD_0$ is a club (and thus cofinal), there exists $H_1 \in \calD\cap\calD_0$ with $\Cu(B_0)\subseteq H_1$.
Pick $C_1 \in \calC$ with $\Cu(C_1)=H_1$.
Using that~$\calC_0$ is cofinal find $B_1 \in \calC_0$ containing $B_0$ and $C_1$.
Then $B_0 \subseteq B_1$ and $\Cu(B_0) \subseteq H_1 \subseteq \Cu(B_1)$.

Proceeding inductively, we find $H_1,H_2,\ldots \in \calD$ and $B_1,B_2,\ldots \in \calC_0$ such that
\[
\Cu(B_0) 
\subseteq H_1
\subseteq \Cu(B_1)
\subseteq H_2
\subseteq \Cu(B_2)
\subseteq \ldots, \andSep
B_0 \subseteq B_1 \subseteq B_2 \subseteq \ldots.
\]

Set $B := \overline{\bigcup_n B_n}$, which is the supremum of the increasing sequence $(B_n)_n$ in~$\Sep(A)$.
By construction, we have $B_0 \subseteq B$.
Since $\alpha$ preserves suprema of countable, directed sets (in particular, of increasing sequences), we have
\[
\alpha(B)=\sup_n \alpha(B_n) = \sup_n H_n.
\]
Since $\calD$ is a club, we have $\sup_n H_n \in \calD$, and thus $B \in \calC$, as desired.
\end{proof}

\begin{prp}
\label{prp:RelateLS}
If $\calP$ is a property of \CuSgp{s} satisfying the L{\"o}wenheim-Skolem condition, then the property of having a Cuntz semigroup with $\calP$ satisfies the L{\"o}wenheim-Skolem condition.
\end{prp}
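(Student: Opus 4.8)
The plan is to reduce everything to \cref{prp:RelateClubs} together with the fact that the intersection of two clubs is again a club. Write $\calP'$ for the property of a \ca{} having a Cuntz semigroup with $\calP$, and let $A$ be a \ca{} with $\calP'$, so that $\Cu(A)$ has $\calP$. The goal is to produce a club of separable sub-\ca{s} of $A$, each of which has $\calP'$, thereby verifying the L{\"o}wenheim-Skolem condition for $\calP'$.

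First I would invoke the L{\"o}wenheim-Skolem condition for $\calP$: since $\Cu(A)$ has $\calP$, it contains a club $\calD \subseteq \Sep(\Cu(A))$ of separable sub-\CuSgp{s} that have $\calP$. Next I would bring in the two clubs supplied by \cref{prp:RelateClubs}, namely $\calC_0 \subseteq \Sep(A)$ (the separable sub-\ca{s} whose inclusion induces an order-embedding of Cuntz semigroups) and $\calD_0 \subseteq \Sep(\Cu(A))$ (the associated induced sub-\CuSgp{s}), linked by the map $\alpha\colon\calC_0\to\calD_0$ that preserves the order and suprema of countable, directed sets. Since $\calD_0 \cap \calD$ is an intersection of two clubs it is again a club, and the last assertion of \cref{prp:RelateClubs} then guarantees that
\[
\calC := \alpha^{-1}(\calD_0 \cap \calD) \subseteq \Sep(A)
\]
is a club.

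It remains to check that every $B \in \calC$ has $\calP'$. For such $B$, the defining property of $\calC$ gives $\alpha(B) \in \calD_0 \cap \calD$, so in particular the induced sub-\CuSgp{} $\alpha(B)$ of $\Cu(A)$ has $\calP$. The key point is that $\alpha(B)$ is, by construction, the image of $\Cu(B)$ under the order-embedding $\Cu(B) \to \Cu(A)$ associated with $B \in \calC_0$; as this map is a \CuMor{} that is injective and order-reflecting, it is a \CuSgp{} isomorphism of $\Cu(B)$ onto $\alpha(B)$. Since $\calP$ is a property of \CuSgp{s} and hence invariant under isomorphism, $\Cu(B)$ has $\calP$, that is, $B$ has $\calP'$. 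Thus $\calC$ is a club of separable sub-\ca{s} of $A$ all having $\calP'$, which is exactly the L{\"o}wenheim-Skolem condition for $\calP'$.

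Once \cref{prp:RelateClubs} is in hand the argument is essentially bookkeeping, and I expect the only place requiring genuine care to be the identification of $\alpha(B)$ with $\Cu(B)$ as abstract \CuSgp{s}, which is what allows membership of $\alpha(B)$ in $\calD$ to transfer $\calP$ back to $\Cu(B)$. I would make sure this passage between ``sub-\CuSgp{} of $\Cu(A)$'' and ``the abstract Cuntz semigroup $\Cu(B)$'' is stated explicitly.
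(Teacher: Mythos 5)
Your proposal is correct and follows essentially the same route as the paper's proof: invoke the L\"{o}wenheim--Skolem condition for $\calP$ to obtain a club $\calD \subseteq \Sep(\Cu(A))$, then apply \cref{prp:RelateClubs} to conclude that $\alpha^{-1}(\calD_0 \cap \calD)$ is a club in $\Sep(A)$ consisting of separable sub-\ca{s} whose Cuntz semigroups have $\calP$. Your explicit identification of $\alpha(B)$ with $\Cu(B)$ as abstract \CuSgp{s} (via the order-embedding) is a detail the paper leaves implicit, and spelling it out is a reasonable addition rather than a deviation.
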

\begin{proof}
To show that the property of having a Cuntz semigroup with $\calP$ satisfies the L{\"o}wenheim-Skolem condition, let $A$ be a \ca{} such that $\Cu(A)$ has $\calP$.
Let the clubs $\calC_0\subseteq\Sep(A)$ and $\calD_0\subseteq\Sep(\Cu(A))$ and the map $\alpha\colon\calC_0\to\calD_0$ be as in \cref{prp:RelateClubs}.
Since $\calP$ satisfies the L{\"o}wenheim-Skolem condition, there exists a club $\calD\subseteq\Sep(\Cu(A))$ of separable sub-\CuSgp{s} with $\calP$.

By \cref{prp:RelateClubs}, $\alpha^{-1}(\calD_0 \cap \calD)$ is a club in $\Sep(A)$.
Note that $\alpha^{-1}(\calD_0 \cap \calD)$ consists of separable sub-\ca{s} of $A$ whose Cuntz semigroup has $\calP$, as desired.
\end{proof}

\begin{thm}
\label{prp:RelateSepDet}
If $\calP$ is a separably determined property of \CuSgp{s}, then the property of having a Cuntz semigroup with $\calP$ is separably determined.
\end{thm}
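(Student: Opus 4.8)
The plan is to combine the previous proposition with the definitional structure of ``separably determined'' for $C^*$-algebras. Recall that a property is separably determined if it satisfies the L\"{o}wenheim-Skolem condition and passes to inductive limits. By \cref{prp:RelateLS}, the first of these two requirements transfers automatically: if $\calP$ satisfies the L\"{o}wenheim-Skolem condition for \CuSgp{s}, then the property of having a Cuntz semigroup with $\calP$ satisfies the L\"{o}wenheim-Skolem condition for \ca{s}. Thus the only remaining work is to verify that the property of having a Cuntz semigroup with $\calP$ passes to inductive limits of \ca{s}.

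For this second part, I would start with an inductive system $(A_i)_{i\in\Lambda}$ of \ca{s} indexed by a directed set $\Lambda$, with inductive limit $A=\varinjlim A_i$, where each $A_i$ has the property that $\Cu(A_i)$ has $\calP$. The key input is that the Cuntz semigroup functor preserves inductive limits, which is \cite[Corollary 3.2.9]{AntPerThi18TensorProdCu} recalled in the preliminaries; hence $\Cu(A)=\varinjlim\Cu(A_i)$ as \CuSgp{s}, with the inductive system of Cuntz semigroups indexed over the same directed set $\Lambda$. Since each $\Cu(A_i)$ has $\calP$ and $\calP$ passes to inductive limits of \CuSgp{s} (because $\calP$ is separably determined), it follows that $\Cu(A)=\varinjlim\Cu(A_i)$ has $\calP$. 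Therefore $A$ has a Cuntz semigroup with $\calP$, which is exactly what we need.

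Putting the two halves together gives the result: the property of having a Cuntz semigroup with $\calP$ satisfies the L\"{o}wenheim-Skolem condition (by \cref{prp:RelateLS}) and passes to inductive limits (by functoriality of $\Cu$ together with the assumption on $\calP$), and hence is separably determined by \cref{dfn:SepDet}.

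I expect this proof to be short and essentially a matter of assembling the pieces already in place; the genuine technical content lives in \cref{prp:RelateClubs} and \cref{prp:RelateLS}, which handle the subtle interplay between clubs in $\Sep(A)$ and clubs in $\Sep(\Cu(A))$. The only point requiring a little care is to make sure the inductive limit in \cref{dfn:SepDet} is allowed to be indexed by an arbitrary (not necessarily countable, not necessarily injective) directed system, and to confirm that the cited preservation result for $\Cu$ applies at this level of generality; once that is granted, the transfer of the inductive-limit property is immediate from functoriality.
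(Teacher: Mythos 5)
Your proposal is correct and follows exactly the same route as the paper's proof: the L\"{o}wenheim--Skolem condition is handled by \cref{prp:RelateLS}, and the inductive-limit part follows from the fact that the Cuntz semigroup functor preserves (arbitrary, not necessarily countable) inductive limits \cite[Corollary~3.2.9]{AntPerThi18TensorProdCu} combined with the assumption that $\calP$ passes to inductive limits of \CuSgp{s}. The paper states this in three sentences; you simply spell out the same argument in more detail, including the point about arbitrary index sets that the paper also flags.
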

\begin{proof}
By \cref{prp:RelateLS}, the property $\calQ$ of having a Cuntz semigroup with $\calP$ satisfies the L{\"o}wenheim-Skolem condition.
By \cite[Corollary~3.2.9]{AntPerThi18TensorProdCu}, the Cuntz semigroup functor preserves inductive limits (over not necessarily countable index sets). 
It follows that $\calQ$ passes to inductive limits.
\end{proof}

\section{Comparison and divisibility are separably determined}
\label{sec:CompDivSepDet}

In this section we show that pureness is separably determined for \ca{s}
see \cref{prp:PureSepDet}.
More specifically, for each $m\in\NN$ we show that $m$-comparison is separably determined (\cref{prp:CompSepDet}), and for each $n\in\NN$ we show that $n$-almost divisibility is separably determined (\cref{prp:DivSepDet}).

\medskip

We first recall the definition of pureness, introduced by Winter for simple \ca{s} in \cite[Definitions~3.1~and~3.5]{Win12NuclDimZstable} and later adapted to the non-simple setting by Robert-Tikuisis \cite[Paragraph~2.3]{RobTik17NucDimNonSimple};
see also \cite{AntPerThiVil24arX:PureCAlgs}.
We will use the relation $<_s$ on a \CuSgp, which is defined by setting $x<_s y$ if there exists $n\in\NN$ such that $(n+1)x\leq ny$.

\begin{dfn}
\label{dfn:pure}
Let $S$ be a \CuSgp, and let $m,n \in \NN$.
One says that $S$ has \emph{$m$-comparison} if for any $x,y_0,\ldots ,y_m \in S$ such that $x<_s y_j$ for each $j=0,\ldots ,m$, then $x\leq y_0+\ldots +y_m$.
Further, one says that $S$ is \emph{$n$-almost divisible} if for any $k\in\NN$ and any $x',x \in S$ such that $x'\ll x$ there exists an element $y$ such that $ky\leq x$ and $x'\leq (k+1)(n+1)y$.

We say that $S$ is \emph{$(m,n)$-pure} if it has $m$-comparison and is $n$-almost divisible. Further, $S$ is said to be \emph{pure} if it is $(0,0)$-pure.

A \ca{} $A$ is termed \emph{$(m,n)$-pure} if its Cuntz semigroup $\Cu(A)$ is $(m,n)$-pure; and $A$ is \emph{pure} if $\Cu(A)$ is pure (See \cref{rmk:MnPureImpPure} below.)
\end{dfn}

\begin{rmk}
\label{rmk:MnPureImpPure}
The property of $0$-comparison is often referred to as \emph{almost unperforation}, and the Cuntz semigroup of a \ca{} $A$ is almost unperforated if and only if $A$ has \emph{strict comparison of positive elements by quasitraces}; 
see \cite[Proposition~6.2]{EllRobSan11Cone} and the discussions in \cite[Paragraph~3.4]{AntPerThiVil24arX:PureCAlgs}.
The property of $0$-almost divisibility is usually referred to as \emph{almost divisibility}.
Thus, a \ca{} is pure if (by definition) its Cuntz semigroup is almost unperforated and almost divisible.

It is easy to see that $(m,n)$-pureness implies $(m',n')$-pureness for all $m' \geq m$ and $n' \geq n$, both for \ca{s} and at the level of \CuSgp{s}.
In the converse direction, \ca{s} exhibit a reduction phenomenon:
a \ca{} is automatically pure whenever it is $(m,n)$-pure for some $m,n \in \NN$. 
This was established in \cite[Theorem~5.7]{AntPerThiVil24arX:PureCAlgs}, building on earlier results in the simple case \cite[Theorem~10.5]{AntPerRobThi24TracesUltra}. 
For simple, unital \ca{s} with locally finite nuclear dimension, the phenomenon also follows from Winter’s $\calZ$-stability theorem \cite{Win12NuclDimZstable} in combination with R{\o}rdam's theorem that $\calZ$-stable \ca{s} are pure \cite{Ror04StableRealRankZ}.
\end{rmk}

Let $S$ be a \CuSgp{}, and let $\varphi_\lambda\colon S_\lambda \to S$ be \CuMor{s} from some \CuSgp{s} $S_\lambda$, for $\lambda\in\Lambda$.
Following \cite[Definition~3.1]{ThiVil21DimCu2}, we say that $S$ is \emph{approximated} by the family $(S_\lambda,\varphi_\lambda)_{\lambda\in\Lambda}$ if the following holds:
For any finite index sets $J$ and $K$, any $x_j',x_j \in S$ for $j \in J$ (thought of as variables), and any functions $m_k,n_k \colon J \to \NN$ for $k \in K$ (thought of as relations) such that
\[
x_j' \ll x_j \text{ for $j \in J$}, \andSep
\sum_{j \in J} m_k(j)x_j \ll \sum_{j \in J} n_k(j)x_j'  \text{ for $k \in K$}
\]
there exists an index $\lambda\in\Lambda$ and $y_j \in S_\lambda$ for $j \in J$ such that
\[
x_j' \ll \varphi_\lambda(y_j) \ll x_j \text{ for $j \in J$}, \andSep
\sum_{j \in J} m_k(j)y_j \ll \sum_{j \in J} n_k(j)y_j \text{ for $k \in K$}.
\]

\begin{lma}
\label{prp:CompApprox}
For each $m\in\NN$, the property of $m$-comparison passes to approximated \CuSgp{s}.
\end{lma}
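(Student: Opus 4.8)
The plan is to take an arbitrary instance of $m$-comparison in $S$, transport it to one of the approximating semigroups $S_\lambda$ (where $m$-comparison is assumed), apply $m$-comparison there, and push the resulting inequality forward along $\varphi_\lambda$. So suppose $S$ is approximated by $(S_\lambda,\varphi_\lambda)_{\lambda\in\Lambda}$ with each $S_\lambda$ having $m$-comparison, and suppose $x,y_0,\ldots,y_m\in S$ satisfy $x<_s y_j$ for every $j$; the goal is $x\leq y_0+\cdots+y_m$. Since by (O2) the element $x$ is the supremum of a $\ll$-increasing sequence, it suffices to fix an arbitrary $x'\ll x$ and prove $x'\leq y_0+\cdots+y_m$. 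We may assume $x\neq 0$, as otherwise the claim is trivial; then each witness $n_j\in\NN$ for $x<_s y_j$, giving $(n_j+1)x\leq n_j y_j$, may be taken with $n_j\geq 1$.

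The first real step is to translate the relations $x<_s y_j$ into the strict ($\ll$) relations that the approximation property consumes. Using interpolation via (O2) I would choose $\hat{x}$ with $x'\ll\hat{x}\ll x$. From $\hat{x}\ll x$ and $(n_j+1)x\leq n_j y_j$ one gets $(n_j+1)\hat{x}\ll n_j y_j$. Writing $y_j$ as the supremum of a $\ll$-increasing sequence and using that addition commutes with suprema of increasing sequences (O4), a standard interpolation then produces $y_j'\ll y_j$ with $(n_j+1)\hat{x}\ll n_j y_j'$. This yields a family of hypotheses in exactly the orientation the approximation property requires: the upper elements appearing on the left, the lower elements on the right.

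Next I would invoke the approximation property with variable set $J=\{-1,0,1,\ldots,m\}$, assigning to slot $-1$ the lower/upper pair $(x',\hat{x})$ and to each slot $j\in\{0,\ldots,m\}$ the pair $(y_j',y_j)$, together with one relation $k_j$ per $j$ encoding $(n_j+1)(\text{slot }-1)\ll n_j(\text{slot }j)$. The required hypotheses $x'\ll\hat{x}$, $y_j'\ll y_j$ and $(n_j+1)\hat{x}\ll n_j y_j'$ are precisely what was arranged above. The property then yields an index $\lambda$ and elements $z_{-1},z_0,\ldots,z_m\in S_\lambda$ with $x'\ll\varphi_\lambda(z_{-1})$, with $\varphi_\lambda(z_j)\ll y_j$, and with the internal relations $(n_j+1)z_{-1}\ll n_j z_j$ for each $j$. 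The last of these gives $z_{-1}<_s z_j$ in $S_\lambda$ for all $j$, so $m$-comparison in $S_\lambda$ yields $z_{-1}\leq z_0+\cdots+z_m$. Applying the additive, order-preserving morphism $\varphi_\lambda$ and chaining inequalities gives $x'\leq\varphi_\lambda(z_{-1})\leq\sum_j\varphi_\lambda(z_j)\leq\sum_j y_j$. Since $x'\ll x$ was arbitrary, taking suprema yields $x\leq y_0+\cdots+y_m$.

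I expect the main obstacle to be the bookkeeping in the second step. The relation $<_s$ is phrased through $\leq$ between integer multiples, whereas the approximation property demands strict $\ll$-relations with a fixed orientation (upper elements on the left, lower on the right) and a single upper bound $\hat{x}$ in the $x$-slot that must serve all $m+1$ relations simultaneously. Achieving genuine strictness on both sides at once, while keeping $\hat{x}$ independent of $j$, is where the interpolation arguments using (O2) and (O4) carry the weight; the transport to $S_\lambda$ and the push-forward along $\varphi_\lambda$ are then purely formal.
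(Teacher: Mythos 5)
Your proposal is correct and follows essentially the same route as the paper's proof: fix $x'\ll x$, interpolate a middle element, convert the $<_s$-relations $(n_j+1)x\leq n_jy_j$ into way-below relations $(n_j+1)\hat{x}\ll n_jy_j'$ with $y_j'\ll y_j$ via \axiomO{2}--\axiomO{4}, feed these into the approximation property, apply $m$-comparison in $S_\lambda$, and push forward along $\varphi_\lambda$. The only differences are notational (the paper writes $z$, $u$, $w_j$ for your $\hat{x}$, $z_{-1}$, $z_j$), and your explicit bookkeeping of the index set $J$ and the orientation of the relations matches the paper's implicit usage exactly.
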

\begin{proof}
Let $m\in\NN$, and let $S$ be a \CuSgp{} that is approximated by the family $(S_\lambda,\varphi_\lambda)_{\lambda\in\Lambda}$.
Assume that each $S_\lambda$ has $m$-comparison.
To show that $S$ has $m$-comparison, let $x,y_0,\ldots ,y_m \in S$ satisfy $x<_s y_j$ for each $j=0,\ldots,m$.
We need to show that $x\leq y_0+\ldots +y_m$.

Let $x' \in S$ satisfy $x' \ll x$, and pick $z \in S$ such that $x' \ll z \ll x$.
Set $z' := x'$.
For each $j$, first pick $n_j\in\NN$ such that $(n_j+1)x \leq n_j y_j$, and then choose $y_j' \in S$ such that
\[
y_j' \ll y_j, \andSep
(n_j+1)z \ll n_j y_j'.
\]
Since $(S_\lambda,\varphi_\lambda)_{\lambda\in\Lambda}$ approximates $S$, there exists $\lambda$ and $u,w_0,\ldots,w_m \in S_\lambda$ such that
\[
z' \ll \varphi_\lambda(u) \ll z, \andSep
y_j' \ll \varphi_\lambda(w_j) \ll y_j \text{ for $j=0,\ldots,m$}
\]
and
\[
(n_j+1)u \ll n_j w_j \text{ for $j=0,\ldots,m$}.
\]
Then $u <_s w_0,\ldots,w_m$.
Since $S_\lambda$ has $m$-comparison, we get $u \leq w_0+\ldots+w_m$, and then
\[
x' = z' 
\leq \varphi_\lambda(u) 
\leq \sum_{j=0}^m \varphi_\lambda(w_j) 
\leq \sum_{j=0}^m y_j.
\]
Since this holds for every $x'$ way-below $x$, we get $x \leq y_0+\ldots+y_m$, as desired.
\end{proof}

\begin{prp}
\label{prp:CompSepDet}
For each $m\in\NN$, the property of $m$-comparison is separably determined for \CuSgp{s}.
\end{prp}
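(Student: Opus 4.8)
The goal is to prove that $m$-comparison is separably determined, meaning it satisfies the Löwenheim-Skolem condition and passes to inductive limits. Since \cref{prp:CompApprox} already establishes that $m$-comparison passes to approximated \CuSgp{s}, and inductive limits are a special (in fact the archetypal) case of approximation, the inductive-limit half of the definition should follow almost immediately from that lemma. The plan is therefore to first dispatch the inductive-limit requirement by observing that if $S = \varinjlim S_\lambda$, then $S$ is approximated by the family of structure maps $\varphi_\lambda \colon S_\lambda \to S$, so $m$-comparison of the $S_\lambda$ forces $m$-comparison of $S$ via \cref{prp:CompApprox}.

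The substantive part is the Löwenheim-Skolem condition: given a \CuSgp{} $S$ with $m$-comparison, I must produce a club of separable sub-\CuSgp{s} that also have $m$-comparison. The strategy I would follow is the standard club-construction via a closing-off argument. Starting from an arbitrary separable $T_0 \in \Sep(S)$, I would build an increasing sequence $T_0 \subseteq T_1 \subseteq \cdots$ of separable sub-\CuSgp{s} in which each $T_{i+1}$ is chosen to witness the instances of $m$-comparison needed for elements of $T_i$. Concretely, for each finite configuration $x', x, y_0', y_0, \ldots, y_m', y_m$ of elements of $T_i$ satisfying the hypotheses of $m$-comparison (the $<_s$ relations, realized through the integers $n_j$), the conclusion $x \leq y_0 + \cdots + y_m$ holds in $S$, so one can find a countable set of witnessing elements (e.g.\ elements way-below $x$ together with the finitely many auxiliary elements appearing in \cref{prp:CompApprox}) and adjoin them; since $T_i$ is countably based, there are only countably many such configurations to handle, so $T_{i+1}$ remains separable. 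The supremum $T := \sup_i T_i = \overline{\bigcup_i T_i}$, which is a separable sub-\CuSgp{} by the discussion preceding \cref{dfn:SepDetCu}, then has $m$-comparison because any instance of the hypotheses in $T$ can be approximated from below by instances living in some $T_i$, whose witnesses were placed into $T_{i+1} \subseteq T$. This yields cofinality; $\sigma$-completeness of the resulting family follows because $m$-comparison passes to inductive limits, exactly as recorded for the analogous \ca{} situation.

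The cleanest way to organize the closing-off, and the step I expect to require the most care, is ensuring that the witnesses adjoined at each stage genuinely certify $m$-comparison \emph{in the limit} rather than only at each finite stage. The delicate point is the interplay between the way-below relation and the passage to suprema: the relation $<_s$ is defined via $\leq$, but $m$-comparison concludes a $\leq$-inequality, and to verify it in $T$ I must reduce to checking $x' \leq y_0 + \cdots + y_m$ for all $x' \ll x$ with $x'$ chosen inside the $T_i$, then use \axiomO{2} to recover $x \leq y_0 + \cdots + y_m$. I would handle this by mirroring the internal structure of the proof of \cref{prp:CompApprox}: at each configuration I record not just $x$ and the $y_j$ but also intermediate elements $z$ with $x' \ll z \ll x$ and the refined $y_j' \ll y_j$ satisfying $(n_j+1)z \ll n_j y_j'$, guaranteeing that $S$ is approximated by the family $(T_i)_i$ and hence, once each $T_i$ is arranged to have $m$-comparison, that $T$ inherits it through \cref{prp:CompApprox} applied with $\Lambda = \NN$ and $\varphi_i$ the inclusions $T_i \hookrightarrow T$. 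Thus the main lemma does double duty, settling both the limit axiom and the core of the cofinality argument, and the only real work is the bookkeeping that keeps each $T_i$ separable while absorbing enough witnesses.
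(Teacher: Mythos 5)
The inductive-limit half of your argument is fine and coincides with the paper's: \cref{prp:CompApprox} plus the fact that an inductive limit is approximated by its building blocks (this is the cited \cite[Proposition~3.5]{ThiVil21DimCu2}). The gap is in the L\"{o}wenheim-Skolem half, where you missed the observation that makes this proposition essentially trivial: $m$-comparison is a \emph{purely universal} statement about $\leq$ and $+$, and a sub-\CuSgp{} $T \subseteq S$ carries the induced order. So if $x, y_0, \ldots, y_m \in T$ satisfy $x <_s y_j$ in $T$, the same relations hold in $S$; if $S$ has $m$-comparison this gives $x \leq y_0 + \cdots + y_m$ in $S$, and since the order on $T$ is induced, the inequality holds in $T$. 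Hence \emph{every} sub-\CuSgp{} of an $m$-comparison \CuSgp{} has $m$-comparison, so the separable sub-\CuSgp{s} with $m$-comparison are all of $\Sep(S)$, which is a club; the L\"{o}wenheim-Skolem condition is immediate. That one-line inheritance argument is the paper's entire proof of this half.

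Your closing-off construction is not merely unnecessary; as described it does not close. For a universally quantified property there are no witnesses to adjoin --- the conclusion $x \leq y_0 + \cdots + y_m$ is an inequality among the given elements, which holds in $T$ if and only if it holds in $S$, so adjoining interpolants to $T_{i+1}$ accomplishes nothing. More seriously, your final verification applies \cref{prp:CompApprox} with $\varphi_i$ the inclusions $T_i \hookrightarrow T$, which requires each $T_i$ to \emph{already} have $m$-comparison; your construction never arranges this and has no mechanism to arrange it by adding elements. (It is in fact true, but only by the inheritance observation above --- and once that is noted, the whole apparatus is superfluous.) There is also a slip where you claim $S$, rather than $T$, is approximated by the family $(T_i)_i$; this is impossible when $S$ is non-separable, since the $T_i$ are countably many separable sub-\CuSgp{s}. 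Closing-off arguments of the kind you sketch are the right tool for \emph{existential} properties, where witnesses genuinely must be absorbed into the subobject; that is precisely how the paper proves that $n$-almost divisibility is separably determined in \cref{prp:DivSepDet}, but $m$-comparison needs none of it.
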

\begin{proof}
Let $m\in\NN$.
It follows directly from the definitions that $m$-comparison passes to sub-\CuSgp{s}.
Therefore, $m$-comparison satisfies the L{\"o}wenheim-Skolem condition.
Further, by \cref{prp:CompApprox}, $m$-comparison passes to approximated \CuSgp{s}, and therefore passes to inductive limits by \cite[Proposition~3.5]{ThiVil21DimCu2}.
\end{proof}

\begin{lma}
\label{prp:DivApprox}
For each $n\in\NN$, the property of $n$-almost divisibility passes to approximated \CuSgp{s}.
\end{lma}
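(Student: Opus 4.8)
The plan is to follow the same template as the proof of \cref{prp:CompApprox}: given a divisibility instance to verify in $S$, use the approximation hypothesis to transport the relevant data into one of the approximating \CuSgp{s} $S_\lambda$, apply $n$-almost divisibility there, and then push the resulting witness back to $S$ through $\varphi_\lambda$. Concretely, let $n\in\NN$ and suppose $S$ is approximated by $(S_\lambda,\varphi_\lambda)_{\lambda\in\Lambda}$ with each $S_\lambda$ being $n$-almost divisible. To prove that $S$ is $n$-almost divisible, fix $k\in\NN$ and $x',x\in S$ with $x'\ll x$; the goal is to produce $y\in S$ satisfying $ky\leq x$ and $x'\leq(k+1)(n+1)y$.

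First I would feed the single pair $x'\ll x$ into the approximation: taking the index set $J=\{1\}$ with variables $x_1'=x'$ and $x_1=x$ and no relations ($K=\emptyset$), the approximation property yields $\lambda\in\Lambda$ and an element $u\in S_\lambda$ with $x'\ll\varphi_\lambda(u)\ll x$. This $u$ is the candidate to which I apply $n$-almost divisibility inside $S_\lambda$, but that property needs a \emph{strict} pair $u'\ll u$, so the next step is to manufacture a suitable interior element $u'$.

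The delicate point --- and the step I expect to be the main obstacle --- is that $n$-almost divisibility in $S_\lambda$ requires an input $u'\ll u$ whose push-forward lower bound $\varphi_\lambda(u')$ still dominates $x'$; an arbitrary interpolant $u'\ll u$ in $S_\lambda$ need not satisfy this. To control it, I would use \axiomO{2} in $S_\lambda$ to write $u=\sup_m u_m$ for a $\ll$-increasing sequence $(u_m)_m$, so that $u_m\ll u$ for every $m$. Since $\varphi_\lambda$ preserves suprema of increasing sequences, $\varphi_\lambda(u)=\sup_m\varphi_\lambda(u_m)$, and because $x'\ll\varphi_\lambda(u)$ there is an index $m_0$ with $x'\leq\varphi_\lambda(u_{m_0})$. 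Setting $u':=u_{m_0}$ gives the required strict pair $u'\ll u$ together with $x'\leq\varphi_\lambda(u')$.

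It then remains to apply $n$-almost divisibility in $S_\lambda$ to the pair $u'\ll u$, obtaining $w\in S_\lambda$ with $kw\leq u$ and $u'\leq(k+1)(n+1)w$, and to set $y:=\varphi_\lambda(w)$. Applying the order-preserving monoid morphism $\varphi_\lambda$ to these two inequalities yields $ky=\varphi_\lambda(kw)\leq\varphi_\lambda(u)\leq x$ and $x'\leq\varphi_\lambda(u')\leq(k+1)(n+1)\varphi_\lambda(w)=(k+1)(n+1)y$, which is exactly what is needed. (Alternatively, one could avoid the sequence argument by imposing the relation $x_1\ll x_2'$ on a two-variable approximation to produce a strict pair $y_1\ll y_2$ directly inside $S_\lambda$ with $x'\ll\varphi_\lambda(y_1)$ and $\varphi_\lambda(y_2)\ll x$; the net effect is the same.)
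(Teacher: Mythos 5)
Your proof is correct and follows essentially the same route as the paper's: transport the pair $x'\ll x$ into some $S_\lambda$ via the approximation property, interpolate to get a strict pair $u'\ll u$ in $S_\lambda$ whose image still dominates $x'$, apply $n$-almost divisibility there, and push the witness forward through $\varphi_\lambda$. The only difference is that the paper simply asserts the existence of $u'\ll u$ with $x'\ll\varphi_\lambda(u')$, whereas you spell out its construction via \axiomO{2} and supremum preservation (obtaining the weaker but sufficient $x'\leq\varphi_\lambda(u')$), which is a fine filling-in of that detail.
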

\begin{proof}
Let $n\in\NN$, and let $S$ be a \CuSgp{} that is approximated by the family $(S_\lambda,\varphi_\lambda)_{\lambda\in\Lambda}$.
Assume that each $S_\lambda$ is $n$-almost divisible.
To show that $S$ is $n$-almost divisible, let $x',x \in S$ satisfy $x' \ll x$.
We need to find $y \in S$ such that $ky \leq x$ and $x' \leq (k+1)(n+1)y$.

We directly obtain $\lambda$ and $u \in S_\lambda$ such that
\[
x' \ll \varphi_\lambda(u) \ll x.
\]
Choose $u' \in S_\lambda$ such that
\[
u' \ll u, \andSep
x' \ll \varphi_\lambda(u').
\]
Using that $S_\lambda$ is $n$-almost divisible, we obtain $w \in S_\lambda$ such that $kw \leq u$ and $u' \leq (k+1)(n+1)w$.
Then $y:=\varphi_\lambda(w)$ has the desired properties.
\end{proof}

\begin{prp}
\label{prp:DivSepDet}
For each $n\in\NN$, the property of $n$-almost divisibility is separably determined for \CuSgp{s}.
\end{prp}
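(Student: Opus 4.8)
The plan is to establish the two defining ingredients of being separably determined: that $n$-almost divisibility passes to inductive limits, and that it satisfies the L\"{o}wenheim--Skolem condition. The first is immediate and parallels the comparison case: by \cref{prp:DivApprox}, $n$-almost divisibility passes to approximated \CuSgp{s}, and hence to inductive limits by \cite[Proposition~3.5]{ThiVil21DimCu2}. All of the work therefore goes into the L\"{o}wenheim--Skolem condition.

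Here the argument departs from that of \cref{prp:CompSepDet}. Whereas $m$-comparison passes to sub-\CuSgp{s} (it is a witness-free condition), $n$-almost divisibility does not: the witness $y$ produced for a pair $x'\ll x$ in a sub-\CuSgp{} $H$ a priori only lives in the ambient $S$. Thus I would prove the L\"{o}wenheim--Skolem condition by a closing-off argument rather than by restriction. Given an $n$-almost divisible $S$ and a separable sub-\CuSgp{} $T_0$, I would build an increasing sequence $T_0\subseteq T_1\subseteq\cdots$ of separable sub-\CuSgp{s}: at stage $i$, fix a countable basis $B_i$ of $T_i$, chosen so that $B_i\subseteq B_{i+1}$, and for each pair $c',c\in B_i$ with $c'\ll c$ and each $k\in\NN$ use $n$-almost divisibility of $S$ to select a witness $y(c',c,k)\in S$ satisfying $k\,y(c',c,k)\leq c$ and $c'\leq(k+1)(n+1)\,y(c',c,k)$; then let $T_{i+1}$ be the separable sub-\CuSgp{} generated by $T_i$ together with these countably many witnesses. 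Setting $T:=\overline{\bigcup_i T_i}$ yields a separable sub-\CuSgp{} containing $T_0$, by the union construction for countable directed families in $\Sep(S)$ recalled earlier.

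To see that $T$ is $n$-almost divisible, the key reduction is that it suffices to handle pairs drawn from the basis $B:=\bigcup_i B_i$ of $T$. Concretely, for arbitrary $x'\ll x$ in $T$ one can find $b',b\in B$ with $x'\leq b'\ll b\leq x$: using \axiomO{2} write $x$ as the supremum of a $\ll$-increasing sequence $(x_m)_m$ in $S$ with $x'\leq x_0$, sandwich basis elements $x_m\leq\beta_m\leq x_{m+1}$ with $\beta_m\in B$, and pass to every other term to obtain a $\ll$-increasing sequence in $B$ with supremum $x$; two consecutive terms give the desired pair $b'\ll b$. Since $b'$ and $b$ both lie in some $B_i$ (the bases being nested), the witness $y:=y(b',b,k)\in T_{i+1}\subseteq T$ selected at stage $i$ satisfies $ky\leq b\leq x$ and $x'\leq b'\leq(k+1)(n+1)y$, as required. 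This shows that the separable, $n$-almost divisible sub-\CuSgp{s} are cofinal in $\Sep(S)$; since $n$-almost divisibility passes to inductive limits, this family is also $\sigma$-complete by the same argument as in \cref{rmk:SepDet}, hence a club, which is exactly the L\"{o}wenheim--Skolem condition.

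The main obstacle is precisely the failure of $n$-almost divisibility to descend to sub-\CuSgp{s}, which is what forces the iterative closing-off in place of a one-line restriction argument. The one technical point to get right is the basis-pair reduction $x'\leq b'\ll b\leq x$: the definition of basis only supplies approximating sequences that are a priori merely increasing, and it is the ``every other term'' trick, built on a genuinely $\ll$-increasing sequence from \axiomO{2}, that upgrades this to a way-below pair with both endpoints in $B$.
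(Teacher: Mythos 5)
Your proposal is correct and takes essentially the same approach as the paper: passage to inductive limits via \cref{prp:DivApprox} and \cite[Proposition~3.5]{ThiVil21DimCu2}, and the L\"{o}wenheim--Skolem condition via the same closing-off construction with nested countable bases and countably many adjoined witnesses at each stage. One small repair: in your basis-pair reduction the $\ll$-increasing sequence with supremum $x$ should be produced by \axiomO{2} applied \emph{inside} $T$ (not in $S$), since only then can basis elements of $T$ be interleaved between consecutive terms; with that reading your interleaving argument correctly fills in the step that the paper's proof merely asserts.
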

\begin{proof}
Let $n\in\NN$.
By \cref{prp:DivApprox}, $n$-almost divisibility passes to approximated \CuSgp{s}, and therefore passes to inductive limits by \cite[Proposition~3.5]{ThiVil21DimCu2}.
It remains to show that it satisfies the L{\"o}wenheim-Skolem condition.

Let $S$ be an $n$-almost divisible \CuSgp{}, and let $\calC$ denote the family of separable sub-\CuSgp{s} that are $n$-almost divisible.
Using that $n$-almost divisibility passes to inductive limits, it follows that $\calC$ is $\sigma$-complete.
To show that~$\calC$ is cofinal, let $T_0 \in \Sep(S)$.
We need to find $T \in \Sep(S)$ with $T_0 \subseteq T$ and such that~$T$ is $n$-almost divisible.
Choose a countable basis $B_0 \subseteq T_0$.
The proof follows a standard technique, as for example the one used in \cite[Proposition~5.3]{ThiVil21DimCu2}.

We inductively choose an increasing sequence $(T_j)_j$ in $\Sep(S)$ and a countable basis $B_j \subseteq T_j$ such that for each $j \in \NN$ the following holds:
\begin{equation}
\tag{$\ast$}
\parbox{11cm}{\emph{For every $x',x \in B_j$ satisfying $x' \ll x$ and every $k\in\NN$, there exists $y_k \in B_{j+1}$ such that $ky_k \leq x$ and $x' \leq (k+1)(n+1)y_k$.}}
\end{equation}

We already chose $T_0$ and $B_0$.
Let $j\in\NN$ and assume we chose $T_j$ and $B_j$.
Set
\[
I_j := \big\{ (x',x) \in B_j \times B_j : x' \ll x \big\},
\]
which is countable since $B_j$ is. 

For each $i=(x',x) \in I_j$ and $k\in\NN$, since $S$ is $n$-almost divisible, we can pick $y_{i,k} \in S$ with $ky_{i,k} \leq x$ and $x' \leq (k+1)(n+1)y_{i,k}$. 
Now $B_j \cup \{ y_{i,k} : i \in I_j, k\in\NN \}$ is a countable subset of $S$, which by \cite[Lemma~5.1]{ThiVil21DimCu2} is contained in some separable sub-\CuSgp{} $T_{j+1} \subseteq S$.
Since $B_j$ is a basis for $T_j$, we have $T_j \subseteq T_{j+1}$.
Pick a countable basis $B_{j+1} \subseteq T_{j+1}$ that contains $B_j$.

Set $T:=\sup_j T_j \in \Sep(S)$.
Clearly $T_0 \subseteq T$, and we claim that $T$ is $n$-almost divisible.
To verify this, let $x' \ll x$ in $T$, and let $k\in\NN$.
Since every element in $T$ is the supremum of an increasing sequence in $\bigcup_j T_j$, we see $\bigcup_j B_j$ is a basis for $T$, which allows us to find $j$ and $u',u \in B_j$ such that $x' \leq u' \ll u \leq x$.
Applying $(\ast)$, we obtain the desired element in $T_{j+1} \subseteq T$.
\end{proof}

\begin{prp}
\label{prp:PureSepDetCu}
For each $m,n\in\NN$, the property of $(m,n)$-pureness is separably determined for \CuSgp{s}.
\end{prp}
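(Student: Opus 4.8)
The plan is to observe that $(m,n)$-pureness is, by definition, the conjunction of two properties—having $m$-comparison and being $n$-almost divisible—each of which has already been shown to be separably determined in \cref{prp:CompSepDet} and \cref{prp:DivSepDet}, respectively. It therefore suffices to verify the general principle that the conjunction of two separably determined properties of \CuSgp{s} is again separably determined, and I would do this by checking the two defining conditions of \cref{dfn:SepDetCu}—stability under inductive limits and the L\"owenheim--Skolem condition—one at a time.

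Stability under inductive limits is immediate. If a \CuSgp{} $S$ is the inductive limit of a system $(S_\lambda)_{\lambda}$ in which each $S_\lambda$ has $m$-comparison and is $n$-almost divisible, then applying the inductive-limit halves of \cref{prp:CompSepDet} and \cref{prp:DivSepDet} separately shows that $S$ has $m$-comparison and is $n$-almost divisible, hence is $(m,n)$-pure.

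For the L\"owenheim--Skolem condition, suppose $S$ is $(m,n)$-pure. Since $S$ has $m$-comparison, \cref{prp:CompSepDet} yields a club $\calD_1 \subseteq \Sep(S)$ of separable sub-\CuSgp{s} with $m$-comparison; since $S$ is $n$-almost divisible, \cref{prp:DivSepDet} yields a club $\calD_2 \subseteq \Sep(S)$ of separable sub-\CuSgp{s} that are $n$-almost divisible. The key point is that the intersection of finitely many clubs is again a club (\cite[Proposition~6.2.9]{Far19BookSetThyCAlg}, used already in the proof of \cref{prp:RelateClubs}), so $\calD_1 \cap \calD_2$ is a club. Because both clubs live inside the same ambient $\Sep(S)$, a member $T \in \calD_1 \cap \calD_2$ lies in $\calD_1$ and in $\calD_2$ simultaneously, and thus has $m$-comparison \emph{and} is $n$-almost divisible, i.e.\ is $(m,n)$-pure. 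This produces the required club of separable $(m,n)$-pure sub-\CuSgp{s}.

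I do not anticipate any serious obstacle: the argument is a routine combination of the two preceding propositions, the only genuinely load-bearing ingredient being the stability of clubs under finite intersection. The one point worth stating carefully is precisely that $\calD_1$ and $\calD_2$ are both families inside $\Sep(S)$, so that $\calD_1 \cap \calD_2$ is literally the family of separable sub-\CuSgp{s} possessing both constituent properties, which is exactly what $(m,n)$-pureness demands.
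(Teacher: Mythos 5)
Your proposal is correct and follows essentially the same route as the paper: the paper's proof simply invokes the general principle that a conjunction of finitely many separably determined properties is separably determined and cites \cref{prp:CompSepDet,prp:DivSepDet}, and your argument (inductive limits handled coordinatewise, L\"owenheim--Skolem via the fact that the intersection of two clubs in $\Sep(S)$ is a club) is precisely the intended justification of that principle.
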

\begin{proof}
In general, the conjunction of finitely (or even countably) many separably determined properties is itself separably determined.
Therefore, the result follows by combining \cref{prp:CompSepDet,prp:DivSepDet}.
\end{proof}

\begin{thm}
\label{prp:PureSepDet}
Pureness of \ca{s} is separably determined.
\end{thm}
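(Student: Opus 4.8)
The plan is to observe that pureness of a \ca{} is, by definition, a property phrased entirely in terms of the Cuntz semigroup, and then to transport the corresponding separable determinacy from \CuSgp{s} to \ca{s} via the transfer result \cref{prp:RelateSepDet}. In this way the proof becomes a direct combination of \cref{prp:PureSepDetCu} and \cref{prp:RelateSepDet}, with all the substantive content already residing in those two statements.

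Concretely, let $\calP$ denote the property of a \CuSgp{} of being $(0,0)$-pure. By \cref{dfn:pure}, a \ca{} $A$ is pure if and only if $\Cu(A)$ is pure, that is, $(0,0)$-pure; in other words, pureness of \ca{s} coincides exactly with the property of having a Cuntz semigroup with $\calP$. First I would invoke \cref{prp:PureSepDetCu} in the case $m=n=0$ to conclude that $\calP$ is separably determined for \CuSgp{s}. Then I would apply \cref{prp:RelateSepDet} to this $\calP$, which yields that the property of having a Cuntz semigroup with $\calP$ is separably determined for \ca{s}. Since this property is precisely pureness, the theorem follows.

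I do not expect a genuine obstacle here, since the difficulty has already been dispatched upstream: \cref{prp:PureSepDetCu} packages the separable determinacy of $m$-comparison and $n$-almost divisibility, while \cref{prp:RelateSepDet} handles the passage between the two categories, relying on the fact that the Cuntz semigroup functor preserves inductive limits together with the club-matching of \cref{prp:RelateClubs}. The only point requiring care is the bookkeeping observation that the definition of pureness for a \ca{} is literally ``$\Cu(A)$ is $(0,0)$-pure'', so that the transfer result applies verbatim with $\calP$ taken to be the $(0,0)$-pureness property of \CuSgp{s}. In particular, no appeal to the reduction phenomenon of \cref{rmk:MnPureImpPure} is needed for this statement; that reduction is instead what will make the extension result downstream possible, by allowing one to pass from $(m,n)$-pureness for some $m,n$ back to genuine pureness.
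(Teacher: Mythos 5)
Your proposal is correct and follows exactly the paper's own proof: the paper likewise deduces the theorem by combining \cref{prp:RelateSepDet} with \cref{prp:PureSepDetCu} in the case $m=n=0$, using that pureness of a \ca{} is by definition $(0,0)$-pureness of its Cuntz semigroup. Your additional remark that the reduction phenomenon of \cref{rmk:MnPureImpPure} is not needed here (only downstream for the extension result) is also accurate.
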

\begin{proof}
This follows from \cref{prp:RelateSepDet} with \cref{prp:PureSepDetCu} for $m=n=0$.
\end{proof}

\begin{rmk}
The result that a \ca{} is pure whenever it is approximated by pure sub-\ca{s} can also be deduced from \cite[Corollary~4.8]{BosVil25PureHom}.
\end{rmk}

\begin{cor}
\label{prp:CharPureness}
A \ca{} is pure if and only if every separable sub-\ca{} is contained in a separable, pure sub-\ca{}.

A \ca{} has strict comparison (of positive elements by quasitraces) if and only if every separable sub-\ca{} is contained in a separable sub-\ca{} with strict comparison.
\end{cor}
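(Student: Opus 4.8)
The plan is to recognize both equivalences as instances of a single general principle about separably determined properties, and then to verify that each property in question is of this type. Concretely, I would first isolate the following statement: \emph{if $\calP$ is any separably determined property of \ca{s}, then a \ca{} $A$ has $\calP$ if and only if every separable sub-\ca{} of $A$ is contained in a separable sub-\ca{} with $\calP$.} Both assertions of the corollary will follow immediately once $\calP$ is taken to be pureness and strict comparison, respectively.

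To establish the general principle, I would argue as follows. For the forward implication, suppose $A$ has $\calP$. Since $\calP$ is separably determined, \cref{prp:ClubWithProperty} shows that the family $\Sep_{\calP}(A)$ of separable sub-\ca{s} with $\calP$ is a club, and in particular cofinal; cofinality is precisely the assertion that every separable sub-\ca{} is contained in one with $\calP$. For the converse, the hypothesis says exactly that $\Sep_{\calP}(A)$ is cofinal. Because $\calP$ passes to inductive limits (being separably determined), \cref{rmk:SepDet} guarantees that $\Sep_{\calP}(A)$ is automatically $\sigma$-complete, whence it is a club. Applying \cref{prp:ClubWithProperty} in the direction $(3)\Rightarrow(1)$, we conclude that $A$ has $\calP$.

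With the principle in hand, the first statement follows by taking $\calP$ to be pureness, which is separably determined by \cref{prp:PureSepDet}. For the second statement, I would first use \cref{rmk:MnPureImpPure} to identify strict comparison of positive elements by quasitraces with almost unperforation of the Cuntz semigroup, that is, with $0$-comparison of $\Cu(A)$. By \cref{prp:CompSepDet} (applied with $m=0$), $0$-comparison is a separably determined property of \CuSgp{s}, and \cref{prp:RelateSepDet} then shows that the property of a \ca{} having a Cuntz semigroup with $0$-comparison is itself separably determined. Invoking the general principle for this property yields the second equivalence. I do not anticipate any genuine obstacle here, since the argument merely reassembles the club and inductive-limit machinery already in place; the only point demanding care is the translation step for strict comparison, where \cref{rmk:MnPureImpPure} must be invoked to recast the condition as a \CuSgp{}-level property before the separable-determinacy results apply.
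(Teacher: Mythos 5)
Your proposal is correct and is essentially the argument the paper intends: the corollary is stated without explicit proof precisely because it follows from \cref{prp:ClubWithProperty} (club $\Leftrightarrow$ cofinal, with $\sigma$-completeness automatic via \cref{rmk:SepDet}) together with \cref{prp:PureSepDet} for pureness and, for strict comparison, the identification with $0$-comparison of $\Cu(A)$ (\cref{rmk:MnPureImpPure}) combined with \cref{prp:CompSepDet} and \cref{prp:RelateSepDet}. Your write-up merely makes this intended chain of references explicit, including the one step that genuinely requires care, namely recasting strict comparison as a \CuSgp{}-level property before invoking separable determinacy.
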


\begin{rmk}
In \cite[Sections~3,4]{AntPerThiVil24arX:PureCAlgs} the study of the unquantized notion of \emph{controlled comparison} (resp. \emph{functional divisibility}) was crucial to obtain the dimension reduction phenomenon \cite[Theorem~C]{AntPerThiVil24arX:PureCAlgs}. Controlled comparison (resp. functional divisibility) is weaker than $m$-comparison for any $m$ (resp. $n$-almost divisibility for any $n$); see \cite[Proposition~4.9]{AntPerThiVil24arX:PureCAlgs} and \cite[Proposition~10.3]{AntPerRobThi24TracesUltra}.

We have shown in this section that the notions of $m$-comparison and $n$-almost divisibility are separably determined. However, this is not true for controlled comparison. 
Indeed, some Villadsen algebras of first type (e.g. the one in \cite{Tom08ClassificationNuclear}) fail to have controlled comparison \cite[Example~5.16]{AntPerThiVil24arX:PureCAlgs}, although they are built as inductive limits whose blocks have controlled comparison (since each block has $m$-comparison for some $m$). 

In contrast, functional divisibility passes to limits. In fact, one can use techniques similar to those in this section to show that it is separably determined.
\end{rmk}

\section{Comparison and divisibility of extensions} 
\label{sec:CompDivExt}

In this section we prove the main result of the paper (\cref{prp:PureExt}): pureness passes to extensions of \ca{s}.
At the level of \CuSgp{s}, we show that the property of being $(m,n)$-pure for some $m,n\in\NN$ passes to extensions;
see \cref{prp:PureExtCu}.
However, the `degree of pureness' may potentially decrease.
In particular, we establish that an extension of pure (that is, $(0,0$)-pure) \CuSgp{s} is $(1,1)$-pure.
In the \ca{} setting, this is sufficient because, as already mentioned, every $(1,1)$-pure \ca{} is pure (\cite[Theorem~5.7]{AntPerThiVil24arX:PureCAlgs}).

\medskip

In the results below, we use additional order-theoretic properties for \CuSgp{s} that are labelled \axiomO{5}-\axiomO{8}. Except for \axiomO{5}, the other properties are needed as assumptions in other results in the literature that will be quoted. Therefore, we will only recall the formulation of \axiomO{5} when needed (namely, in the proof of \cref{prp:DivExtSep}). Let us remark that the Cuntz semigroup of every \ca{} satisfies \axiomO{5}-\axiomO{8}: specifically, we refer to \cite[Proposition~4.6]{AntPerThi18TensorProdCu} for \axiomO{5}, to \cite[Proposition~5.1.1]{Rob13Cone} for \axiomO{6}, to \cite[Proposition~2.2]{AntPerRobThi21Edwards} for \axiomO{7}, and to \cite[Theorem~7.4]{ThiVil24NowhereScattered} for \axiomO{8}.

Given a \CuSgp{} $S$, an \emph{ideal} is a submonoid $I \subseteq S$ that is closed under suprema of increasing sequences and that is downward hereditary (that is, if $x,y \in S$ satisfy $x \leq y$ and $y \in I$, then $x \in I$).
Given an ideal $I$, the relations $\leq_I$ and $\sim_I$ on~$S$ are defined by setting $x \leq_I y$ if there exists $w \in I$ such that $x \leq y+w$, and by setting $x \sim_I y$ if $x \leq_I y$ and $y \leq_I x$.
Then $\sim_I$ is an equivalence relation, and the set of equivalence classes is denoted by $S/I$.
The addition on $S$ induces an addition on $S/I$, and the relation $\leq_I$ induces a partial order on $S/I$.
This gives $S/I$ the structure of a \CuSgp{}; see \cite[Section~5.1]{AntPerThi18TensorProdCu} for details. Note that, upon replacing $w$ by $\infty w:=\sup_n nw$ in the definition of $\leq_I$, we may assume that $w\in I$ is an idempotent, i.e. $2w=w$.

\begin{lma}
\label{prp:CompExtSep}
Let $S$ be a separable \CuSgp{} satisfying \axiomO{5}-\axiomO{7}, and let~$I$ be an ideal in~$S$.
Assume that $S/I$ has $m_1$-comparison, and that $I$ has $m_2$-comparison.
Then $S$ has $(m_1+m_2+1)$-comparison.
\end{lma}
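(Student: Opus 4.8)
The plan is to establish the defining implication of $(m_1+m_2+1)$-comparison directly: given $x,y_0,\ldots,y_{m_1+m_2+1}\in S$ with $x<_s y_j$ for every $j$, I will show $x\leq\sum_j y_j$. Since it suffices to prove $x'\leq\sum_j y_j$ for each $x'\ll x$ and then pass to the supremum, I fix such an $x'$. The idea is to split the $m_1+m_2+2$ elements into a first block $y_0,\ldots,y_{m_1}$ (of size $m_1+1$), to be handled in the quotient, and a second block $y_{m_1+1},\ldots,y_{m_1+m_2+1}$ (of size $m_2+1$), to be handled in the ideal. Applying the quotient \CuMor{} $S\to S/I$, which preserves the order and the addition and hence the relation $<_s$, I obtain $[x]<_s[y_j]$ in $S/I$, where $[\cdot]$ denotes the class in $S/I$. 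Invoking $m_1$-comparison of $S/I$ on the first block yields $[x]\leq[y_0]+\cdots+[y_{m_1}]$, that is, $x\leq y_0+\cdots+y_{m_1}+w$ for some idempotent $w\in I$.

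Next I localize the error term inside $I$. Applying \axiomO{5} to $x'\ll x\leq(y_0+\cdots+y_{m_1})+w$ produces elements $a,b\leq x$ with $x'\leq a+b$, $a\leq y_0+\cdots+y_{m_1}$ and $b\leq w$; in particular $b\in I$ since $I$ is hereditary. Because $b\leq x$ and $x<_s y_k$, for each index $k$ of the second block I get $b<_s y_k$ with the same witness. It therefore remains to prove $b\leq y_{m_1+1}+\cdots+y_{m_1+m_2+1}$, for then $x'\leq a+b\leq\sum_{j=0}^{m_1+m_2+1}y_j$, and taking the supremum over $x'\ll x$ finishes the argument.

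To conclude I want to run $m_2$-comparison \emph{inside} $I$, which forces me to replace each $y_k$ of the second block by a companion $z_k\in I$ with $z_k\leq y_k$ and $b<_s z_k$ in $I$. To build $z_k$ I fix a witness $n$ with $(n+1)b\leq n y_k$ and, for each $q\ll(n+1)b$, apply \axiomO{5} repeatedly to $q\leq n y_k=y_k+\cdots+y_k$ to decompose $q\leq c_1+\cdots+c_n$ with $c_i\leq y_k$ and $c_i\leq(n+1)b\in I$, so that $c_i\in I$. The crux is to absorb all these pieces into a single element of $I$ below $y_k$: I claim that $\{z\in I:z\leq y_k\}$ is upward directed and hence, being closed under suprema of increasing sequences, possesses a largest element $z_k$. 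Granting this, $c_i\leq z_k$ for all $i$, whence $q\leq n z_k$; letting $q$ run through an increasing sequence with supremum $(n+1)b$ gives $(n+1)b\leq n z_k$, i.e.\ $b<_s z_k$ in $I$. Then $m_2$-comparison of $I$ applied to $z_{m_1+1},\ldots,z_{m_1+m_2+1}$ yields $b\leq z_{m_1+1}+\cdots+z_{m_1+m_2+1}\leq y_{m_1+1}+\cdots+y_{m_1+m_2+1}$, as required. I expect the main obstacle to be precisely the existence of a largest element of $I$ below $y_k$: verifying the upward directedness of $\{z\in I:z\leq y_k\}$ is where the finer axioms \axiomO{5}--\axiomO{7} and the separability (countable basis) of $S$ enter, the latter allowing the relevant join to be realized as the supremum of a countable increasing sequence extracted from the \axiomO{5}-decompositions.
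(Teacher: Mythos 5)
Your overall architecture is the same as the paper's: push the hypothesis to $S/I$ to get $x\leq y_0+\cdots+y_{m_1}+w$ with $w\in I$ idempotent, cut the error term down into $I$, and finish with $m_2$-comparison inside $I$. The problem is that the step you yourself flag as the crux --- the existence of a largest element $z_k$ of $\{z\in I: z\leq y_k\}$ --- is precisely where separability and \axiomO{7} do all the work, and you do not prove it: you assert that this set is upward directed and ``expect'' the verification to follow from \axiomO{5}--\axiomO{7} and separability. That directedness is not a routine consequence of the axioms; it is essentially the content of the external result the paper invokes, namely \cite[Theorem~2.4]{AntPerRobThi21Edwards}, which states that in a separable \CuSgp{} satisfying \axiomO{7} the infimum of an arbitrary element with an idempotent exists. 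Indeed, since $S$ is countably based, $I$ has a largest element $\omega$ (intersect a countable basis of $S$ with $I$, which is a basis of $I$ by downward heredity, and take the supremum of the partial sums), and $\omega$ is idempotent; your $z_k$, if it exists, is exactly $y_k\wedge\omega$, and conversely the existence of $y_k\wedge\omega$ is equivalent to the claim you leave open. So, as written, the proposal has a genuine gap at its central step; the claim is true, but it must be imported (or reproved) rather than presumed.

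Once that gap is filled, the rest of your argument is sound and closely parallels the paper's proof. Your decomposition $x'\leq a+b$ with $a\leq y_0+\cdots+y_{m_1}$ and $b\leq w,x$ is an application of \axiomO{6} (almost Riesz decomposition) rather than \axiomO{5} in this paper's labelling --- harmless, since both are assumed --- and your iterated decomposition showing $(n+1)b\leq nz_k$ is a standard \axiomO{6} induction (with some care about interpolating $\ll$'s). For comparison, the paper gets both of these steps in one stroke from \cite[Theorem~2.5]{AntPerRobThi21Edwards}: part~(ii) upgrades $x\leq y_0+\cdots+y_{m_1}+w$ to $x\leq y_0+\cdots+y_{m_1}+(x\wedge w)$, and part~(i) says that $z\mapsto z\wedge w$ is a generalized \CuMor{}, hence order-preserving and additive, so it transports $x<_s z_j$ directly to $(x\wedge w)<_s(z_j\wedge w)$ inside $I$. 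Your hands-on route is a legitimate alternative packaging of these two steps, but it cannot substitute for the existence of the infimum itself.
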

\begin{proof}
To verify that the \CuSgp{} $S$ has $(m_1+m_2+1)$-comparison, let $x,y_0,\ldots,y_{m_1},z_0,\ldots,z_{m_2} \in S$ be such that $x<_s y_j$ for $j=0,\ldots,m_1$ and $x <_s z_j$ for $j=0,\ldots,m_2$.
We need to show that $x \leq y_0+\ldots+y_{m_1}+z_0+\ldots+z_{m_2}$.

The relations $x<_s y_j$ still hold when passing to the quotient $S/I$. 
Using that~$S/I$ has $m_1$-comparison, we get that
\begin{equation}
\label{eq:CompExt-1}
x \leq y_0+\ldots+y_{m_1}+w
\end{equation}
for some idempotent $w \in I$. 

As shown in \cite[Theorem~2.4]{AntPerRobThi21Edwards}, in separable \CuSgp{s} satisfying \axiomO{7}, the infimum between an arbitrary element and an idempotent element exists.
In our setting, it follows that $x \wedge w$ exists in $S$.
Moreover, by applying \cite[Theorem~2.5~(ii)]{AntPerRobThi21Edwards} to \eqref{eq:CompExt-1}, it follows that
\begin{equation}
\label{eq:CompExt-2}
x \leq y_0+\ldots+y_{m_1}+(x\wedge w).
\end{equation}

By \cite[Theorem~2.5~(i)]{AntPerRobThi21Edwards}, the map $S \to I$, $z \mapsto (z\wedge w)$, is a generalized \CuMor.
It follows that $(x\wedge w)<_s(z_j\wedge w)$ for $j=0,\ldots,m_2$.
Using that $I$ has $m_2$-comparison, we get that
\[
(x\wedge w) 
\leq (z_0\wedge w)+\ldots+(z_{m_2}\wedge w)
\leq z_0+\ldots+z_{m_2}.
\]
Combining this estimate with \eqref{eq:CompExt-2}, we get 
\[
x 
\leq y_0+\ldots+y_{m_1}+(x\wedge w)
\leq y_0+\ldots+y_{m_1}+z_0+\ldots+z_{m_2},
\]
as desired.
\end{proof}

\begin{prp}
\label{prp:OSepDet}
Each of the properties \axiomO{5}-\axiomO{8} is separably determined.
\end{prp}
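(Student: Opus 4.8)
The plan is to verify, for each $k \in \{5,6,7,8\}$, the two defining requirements of \cref{dfn:SepDetCu}: that \axiomO{k} passes to inductive limits and that it satisfies the Löwenheim–Skolem condition. The guiding observation is that each of these axioms has the same logical shape as the comparison and divisibility conditions treated in \cref{sec:CompDivSepDet}: it asserts that whenever finitely many elements satisfy certain hypotheses expressed through $\ll$, $\leq$ and $+$, then there exist witness elements satisfying further relations of the same kind. Because of this common form, I expect the two techniques already developed to apply, namely passage to approximated \CuSgp{s} for the limit direction and a closing-off construction for the Löwenheim–Skolem direction. Note that, in contrast to $m$-comparison, these axioms do \emph{not} pass to arbitrary sub-\CuSgp{s} (the existential witnesses need not lie in a given subobject), so the closing-off step is genuinely required.

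For the inductive limits, I would show that each \axiomO{k} passes to approximated \CuSgp{s} and then invoke \cite[Proposition~3.5]{ThiVil21DimCu2}, exactly as in \cref{prp:CompApprox} and \cref{prp:DivApprox}. Concretely, given an instance of the axiom in an approximated \CuSgp{} $S$, one first chooses way-below approximants to the elements appearing in the hypotheses, uses the approximation property (recalled before \cref{prp:CompApprox}) to transport these together with the relevant comparison relations into some $S_\lambda$, applies \axiomO{k} inside $S_\lambda$ to produce a witness there, and finally pushes the witness forward along the \CuMor{} $\varphi_\lambda$. Since $\varphi_\lambda$ preserves order and the way-below relation, the inequalities certifying the axiom survive, and letting the approximants increase to the original elements recovers the required conclusion in $S$.

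For the Löwenheim–Skolem condition, I would run the standard closing-off argument of \cref{prp:DivSepDet}. Starting from $T_0 \in \Sep(S)$ with a countable basis $B_0$, I would inductively enlarge $T_j$ to $T_{j+1}$ by adjoining, for every instance of the hypotheses of \axiomO{k} formed from elements of $B_j$, a witness chosen in $S$ using that $S$ satisfies the axiom; since $B_j$ is countable only countably many witnesses are added, and \cite[Lemma~5.1]{ThiVil21DimCu2} produces a separable sub-\CuSgp{} $T_{j+1}$ containing them. Setting $T := \sup_j T_j$, one verifies \axiomO{k} in $T$ by reducing an arbitrary instance to one among basis elements: using \axiomO{1}--\axiomO{2} and that $\bigcup_j B_j$ is a basis for $T$, every way-below hypothesis can be interpolated through basis elements appearing at some finite stage $j$, at which point the witness secured in $T_{j+1} \subseteq T$ certifies the axiom.

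The main obstacle I anticipate is the per-axiom bookkeeping, and in particular the correct handling of the way-below relation both in the passage to the sub-\CuSgp{} $T$ and in the approximation step. The witnesses occurring in \axiomO{5}--\axiomO{8} are constrained simultaneously by upper and lower bounds (and in some cases by $\ll$-relations), so one must check that interpolating the hypotheses through basis elements does not weaken the conclusion, and that the comparison relations encoding each axiom genuinely fit the template of the approximation notion. I expect \axiomO{8} to demand the most care, since it is the least local of the four; nevertheless the same scheme applies in each case, and the verifications, while tedious, are routine given the tools assembled in \cref{sec:CompDivSepDet}.
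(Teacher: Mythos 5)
Your proposal is correct and is essentially the paper's approach: the paper's proof of \cref{prp:OSepDet} simply cites \cite[Proposition~5.3]{ThiVil21DimCu2} for the L\"{o}wenheim--Skolem condition and \cite[Corollary~3.6]{ThiVil21DimCu2} for passage to inductive limits of \axiomO{5}--\axiomO{7} (remarking that \axiomO{8} follows by similar methods), and the proofs of those cited results are precisely the two arguments you outline --- transport of witnesses through approximated \CuSgp{s} via \cite[Proposition~3.5]{ThiVil21DimCu2}, and the closing-off construction as in \cref{prp:DivSepDet}. The only difference is that you reprove the cited facts rather than invoke them, and your side remark that the existential witnesses force the closing-off step (unlike for $m$-comparison, which passes to sub-\CuSgp{s}) is also accurate.
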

\begin{proof}
Each of the properties \axiomO{5}-\axiomO{7} satisfies the L{\"o}wenheim-Skolem condition by \cite[Proposition~5.3]{ThiVil21DimCu2}, and passes to inductive limits by \cite[Corollary~3.6]{ThiVil21DimCu2}.
With similar methods, one shows that also \axiomO{8} is separably determined.
\end{proof}

Let $I$ be an ideal in a \CuSgp{} $S$, and let $\pi_I \colon S \to S/I$ be the quotient map.
If $T \subseteq S$ is a sub-\CuSgp{}, then it is not clear that $\pi_I(T)$ is a sub-\CuSgp{} of $S/I$.
However, the next result shows that there are many separable sub-\CuSgp{s} of $S$ with this property.
The result is proved with similar methods as \cref{prp:RelateClubs} and \cite[Proposition~6.1]{ThiVil21DimCu2}.
We omit the details.

\begin{lma}
\label{prp:RelateClubsQuot}
Let $S$ be a \CuSgp{}, let~$I$ be an ideal in~$S$, and let $\pi_I \colon S \to S/I$ be the quotient map.
Then the collection $\calC_0$ of separable sub-\CuSgp{s} $T \subseteq S$ such that $\pi_I(T)$ is a sub-\CuSgp{} of $S/I$ is a club in $\Sep(S)$.

Further, if $\calD\subseteq\Sep(S/I)$ is a club, then so is
\[
\calC := \big\{ T \in \calC_0 : \pi_I(T) \in \calD \big\}.
\]
\end{lma}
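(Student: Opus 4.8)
The plan is to follow the template of \cref{prp:RelateClubs}, simply replacing the order-embedding condition by the condition that $\pi_I(T)$ be a sub-\CuSgp{}. The observation that streamlines everything is that the ``\axiomO{2}-part'' of being a sub-\CuSgp{} is automatic here: if $T$ is a sub-\CuSgp{} of $S$ and $x \in T$, then writing $x = \sup_n x_n$ with $x_n \in T$ and $x_n \ll x$ in $S$ (internal \axiomO{2}) and applying the \CuMor{} $\pi_I$ gives $\pi_I(x) = \sup_n \pi_I(x_n)$ with $\pi_I(x_n) \ll \pi_I(x)$ in $S/I$ and $\pi_I(x_n) \in \pi_I(T)$. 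Thus, by \cite[Corollary~4.12]{ThiVil21DimCu2}, $\pi_I(T)$ is a sub-\CuSgp{} if and only if it is closed under suprema of increasing sequences in $S/I$. So $\calC_0$ is exactly the family of separable sub-\CuSgp{s} $T$ for which $\pi_I(T)$ is closed under increasing suprema, and I would regard the assignment $\beta\colon \calC_0 \to \Sep(S/I)$, $T \mapsto \pi_I(T)$, as the analogue of the map $\alpha$ from \cref{prp:RelateClubs}.

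For cofinality of $\calC_0$, I would run a closing-off argument. Given $T_0 \in \Sep(S)$ with a countable basis, I build an increasing sequence $T_0 \subseteq T_1 \subseteq \ldots$ of separable sub-\CuSgp{s} together with countable bases $B_j \subseteq T_j$ so that, at each stage, for every pair $a,b \in B_j$ with $a \leq_I b$ one fixes an idempotent witness $w_{a,b} \in I$ with $a \leq b + w_{a,b}$ (available since $a \leq_I b$, replacing $w$ by $\infty w$ as in the paragraph before the statement) and adjoins all these countably many witnesses, using \cite[Lemma~5.1]{ThiVil21DimCu2} to close up to a separable sub-\CuSgp{} $T_{j+1}$. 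Setting $T := \sup_j T_j$, whose basis is $\bigcup_j B_j$, one checks that $\pi_I(T)$ is closed under suprema: an increasing sequence in $\pi_I(T)$ reduces, using the internal \axiomO{2} of $\pi_I(T)$ already noted, to a $\ll$-increasing sequence $\pi_I(c_k)$ with $c_k \in \bigcup_j B_j$, and the retained witnesses let one pass to the telescoped representatives
\[
\tilde{c}_k := c_k + \sum_{i<k} w_{c_i,c_{i+1}},
\]
which are increasing in $S$, lie in $T$, and satisfy $\pi_I(\tilde{c}_k) = \pi_I(c_k)$; hence $\sup_k \tilde{c}_k \in T$ is a preimage of $\sup_k \pi_I(c_k)$, so $T \in \calC_0$ and $T_0 \subseteq T$.

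The same telescoping computation is what shows that $\beta$ preserves order and suprema of countable, directed subsets, and from this $\sigma$-completeness of $\calC_0$ follows: for $\calC' \subseteq \calC_0$ countable and directed with $M := \bigcup \calC'$, one gets $\pi_I(\overline{M}) = \overline{\pi_I(M)}$, which is a sub-\CuSgp{} by \cite[Corollary~4.12]{ThiVil21DimCu2}, so $\sup \calC' \in \calC_0$. Hence $\calC_0$ is a club. For the second assertion I would argue exactly as in the final part of \cref{prp:RelateClubs}: the image family $\calD_0 := \beta(\calC_0)$ is a club in $\Sep(S/I)$ (cofinality uses surjectivity of $\pi_I$ together with the same closing-off, and $\sigma$-completeness uses that $\beta$ preserves directed suprema), and since the intersection of finitely many clubs is again a club \cite[Proposition~6.2.9]{Far19BookSetThyCAlg}, the back-and-forth between the cofinal families $\calC_0$ and $\calD_0 \cap \calD$ produces, for each $T_0 \in \Sep(S)$, an element $T \in \calC_0$ with $T_0 \subseteq T$ and $\pi_I(T) \in \calD$. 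As $\calC = \{T \in \calC_0 : \pi_I(T) \in \calD\} = \beta^{-1}(\calD_0 \cap \calD)$, its $\sigma$-completeness follows once more from $\beta$ preserving directed suprema and from $\sigma$-completeness of $\calD$, so $\calC$ is a club.

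I expect the main obstacle to be precisely the lifting of increasing sequences through the quotient, namely the identity $\pi_I(\overline{M}) = \overline{\pi_I(M)}$ underlying both the closing-off and the preservation of directed suprema by $\beta$. The difficulty is that a representative of a supremum in $S/I$ need not be a supremum of representatives, and, since \cref{prp:RelateClubsQuot} assumes no order-theoretic axioms, one cannot invoke infima with idempotents as in the proof of \cref{prp:CompExtSep}. The idempotent-absorption device recalled before the statement, replacing each witness $w \in I$ by $\infty w$ so that $2w = w$ and the witnesses telescope additively, is exactly what circumvents this, and guaranteeing that these countably many witnesses are retained inside $T$ is the entire purpose of the closing-off.
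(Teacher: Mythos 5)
Your reduction via \cite[Corollary~4.12]{ThiVil21DimCu2} and your closing-off proof of cofinality of $\calC_0$ are correct, and they implement exactly the method the paper points to (note, though, that idempotence of the witnesses is not what makes the telescoping work: plain witnesses $a\le b+w_{a,b}$ with $w_{a,b}\in I$ telescope just as well, since finite sums of elements of $I$ stay in $I$; also, you should require $B_j\subseteq B_{j+1}$ so that consecutive terms of the extracted sequence lie in a common $B_j$). The genuine gap is in $\sigma$-completeness, and more generally in your claim that $\beta\colon T\mapsto\pi_I(T)$ preserves suprema of countable directed sets. You justify $\pi_I(\overline{M})=\overline{\pi_I(M)}$ by ``the same telescoping computation'', but that computation can only be run if the witnesses for the relations $c_k\le_I c_{k+1}$ lie in $M$ itself. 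In the cofinality argument this holds by construction, because you adjoined them; for an arbitrary countable directed family $\calC'\subseteq\calC_0$ it need not hold: membership in $\calC_0$ only says that the image is a sub-\CuSgp{}, not that witnesses are internal. Concretely, let $S=\overline{\NN}^2$ with coordinatewise order and addition, let $I=\{0\}\times\overline{\NN}$ (so $S/I\cong\overline{\NN}$ via the first coordinate), and let $T=\{(m+2n,2m):m,n\in\NN\}\cup\{(\infty,2m):m\in\NN\}\cup\{(\infty,\infty)\}$, the sup-closure of the submonoid generated by $(1,2)$ and $(2,0)$. By \cite[Corollary~4.12]{ThiVil21DimCu2} this is a (countable, hence separable) sub-\CuSgp{} of $S$, and $\pi_I(T)=\overline{\NN}$, so $T\in\calC_0$; yet $T\cap I=\{0\}$, and the relation $\pi_I\bigl((1,2)\bigr)\le\pi_I\bigl((2,0)\bigr)$ admits no witness inside $T$ (one needs some $(0,w)$ with $w\ge 2$). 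So the telescoping device is simply unavailable inside general members of $\calC_0$, and both the $\sigma$-completeness of $\calC_0$ and the club property of $\calC=\beta^{-1}(\calD_0\cap\calD)$ (whose cofinality, via your back-and-forth, again invokes sup-preservation of $\beta$) are left unproven.

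The natural repair is to build the witness condition into the family you close off: let $\calC_0'$ consist of those separable sub-\CuSgp{s} $T\subseteq S$ such that for all $a,b\in T$ with $a\le_I b$ there is $w\in T\cap I$ with $a\le b+w$. Your closing-off argument shows $\calC_0'$ is cofinal; your telescoping shows $\calC_0'\subseteq\calC_0$; and $\calC_0'$ \emph{is} $\sigma$-complete, because for a directed union one can assemble an internal witness: reduce $a\le_I b$ to relations $a_k\le_I b_{m_k}$ between basis elements of the pieces (using that $\pi_I$ is a \CuMor{}), take internal witnesses $w_k$ there, and set $w:=\sup_k(w_1+\dots+w_k)\in T\cap I$. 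On $\calC_0'$ the map $\beta$ does preserve countable directed suprema, which then gives cofinality and $\sigma$-completeness of $\{T\in\calC_0':\pi_I(T)\in\calD\}$ by the back-and-forth, exactly as in \cref{prp:RelateClubs}. This proves that $\calC_0$ \emph{contains} a club of the stated kind and yields the preimage statement for a club contained in $\calC$ --- which is all that the applications (\cref{prp:CompExt,prp:DivExt}) require --- but it is weaker than the literal statement that $\calC_0$ itself is a club, and your write-up does not establish the latter.
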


\begin{lma}
\label{prp:RelateClubsIdl}
Let~$I$ be an ideal in a \CuSgp{}~$S$, and let $\calD \subseteq \Sep(I)$ be a club.
Then $\{T \in \Sep(S) : I \cap T \in \calD\}$ is a club in $\Sep(S)$.
\end{lma}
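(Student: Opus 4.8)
The plan is to study the assignment $\beta\colon\Sep(S)\to\Sep(I)$ given by $\beta(T):=I\cap T$ and to observe that the family in the statement is precisely $\beta^{-1}(\calD)$. The first task is to check that $\beta$ is well defined, that is, that $I\cap T$ is a separable sub-\CuSgp{} of $I$ for \emph{every} $T\in\Sep(S)$; in contrast to \cref{prp:RelateClubs,prp:RelateClubsQuot}, no auxiliary club $\calC_0$ of ``good'' $T$ is needed here. The crucial feature is that $I$ is downward hereditary. Indeed, $I\cap T$ is a submonoid that is closed under suprema of increasing sequences, being the intersection of two such submonoids. Moreover, given $x\in I\cap T$ and using \axiomO{2} in the sub-\CuSgp{} $T$, I would write $x=\sup_n x_n$ for a $\ll$-increasing sequence $(x_n)_n$ in $T$ (still $\ll$-increasing in $S$, since the inclusion $T\hookrightarrow S$ is a \CuMor). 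Then $x_n\le x\in I$ forces $x_n\in I$, so $(x_n)_n$ lies in $I\cap T$; hence every element of $I\cap T$ is the supremum of a $\ll$-increasing sequence of elements of $I\cap T$, and \cite[Corollary~4.12]{ThiVil21DimCu2} shows that $I\cap T$ is a sub-\CuSgp{} of $S$, and therefore of $I$. Separability follows by intersecting a countable basis of $T$ with $I$, which again lands inside $I\cap T$ by downward heredity.

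Next I would record the two order-theoretic properties of $\beta$ needed to transport clubs. Monotonicity is immediate. For preservation of suprema of countable directed sets $\calD'\subseteq\Sep(S)$, write $T=\sup\calD'=\overline{\bigcup_{T'\in\calD'}T'}$. The inclusion $\sup_{T'\in\calD'}\beta(T')\subseteq\beta(T)$ is clear since $I\cap T$ is a sub-\CuSgp{} containing each $I\cap T'$. For the reverse inclusion I would take $x\in I\cap T$, express it as $x=\sup_n x_n$ with $(x_n)_n$ increasing in $\bigcup_{T'}T'$, and invoke downward heredity once more to get $x_n\in I\cap\bigcup_{T'}T'=\bigcup_{T'}(I\cap T')$, whence $x\in\sup_{T'}\beta(T')$. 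This gives $\beta(\sup\calD')=\sup\beta(\calD')$.

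With these properties the conclusion is a routine club argument. For $\sigma$-completeness of $\beta^{-1}(\calD)$: if $\calC'\subseteq\beta^{-1}(\calD)$ is countable and directed, then $\beta(\sup\calC')=\sup\beta(\calC')$ is the supremum of a countable directed subfamily of $\calD$, hence lies in $\calD$ by $\sigma$-completeness, so $\sup\calC'\in\beta^{-1}(\calD)$. For cofinality, given $T_0\in\Sep(S)$ I would interleave the cofinality of $\calD$ in $\Sep(I)$ with that of $\Sep(S)$, using \cite[Lemma~5.1]{ThiVil21DimCu2} to generate separable sub-\CuSgp{s}: pick $H_1\in\calD$ with $\beta(T_0)\subseteq H_1$, then $T_1\in\Sep(S)$ with $T_0\cup H_1\subseteq T_1$ (so that $H_1\subseteq I\cap T_1=\beta(T_1)$ because $H_1\subseteq I$), and continue, producing
\[
\beta(T_0)\subseteq H_1\subseteq\beta(T_1)\subseteq H_2\subseteq\beta(T_2)\subseteq\cdots.
\]
Setting $T:=\sup_j T_j$ then yields $\beta(T)=\sup_j\beta(T_j)=\sup_k H_k\in\calD$, so $T\in\beta^{-1}(\calD)$ with $T_0\subseteq T$.

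The main obstacle is entirely concentrated in the first two steps: verifying that $I\cap T$ is genuinely a sub-\CuSgp{} (rather than merely a submonoid closed under suprema) and that $\beta$ commutes with suprema of countable directed sets. Both reduce to the same mechanism, namely that downward heredity of the ideal forces the $\ll$-approximating and the supremum-decomposing sequences of an element of $I$ to remain inside $I$. Once this is isolated, the rest is the standard back-and-forth bookkeeping already carried out in \cref{prp:RelateClubs}.
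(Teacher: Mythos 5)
Your proof is correct and is essentially the argument the paper has in mind: the paper omits the details, pointing to the $C^*$-algebra analogue \cite[Lemma~3.2(1)]{Thi23grSubhom}, whose method is exactly your scheme --- first using downward heredity of $I$ to show $T\mapsto I\cap T$ is a well-defined map $\Sep(S)\to\Sep(I)$ that preserves suprema of countable directed families, then running the interleaving cofinality argument already used in \cref{prp:RelateClubs}. Your isolation of the key mechanism (heredity forces $\ll$-approximating sequences and supremum decompositions of elements of $I$ to stay inside $I\cap T$, so that \cite[Corollary~4.12]{ThiVil21DimCu2} applies and no auxiliary club $\calC_0$ is needed) is precisely the point that makes this lemma simpler than \cref{prp:RelateClubs,prp:RelateClubsQuot}.
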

\begin{proof}
This is the \CuSgp{} analog of \cite[Lemma~3.2(1)]{Thi23grSubhom} and can be proved with similar methods.
We omit the details.
\end{proof}

\begin{prp}
\label{prp:CompExt}
Let $S$ be a \CuSgp{} satisfying \axiomO{5}-\axiomO{7}, and let~$I$ be an ideal in~$S$.
Assume that $S/I$ has $m_1$-comparison, and that $I$ has $m_2$-comparison.
Then $S$ has $(m_1+m_2+1)$-comparison.
\end{prp}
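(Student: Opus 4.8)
The plan is to reduce to the separable case treated in \cref{prp:CompExtSep} by producing a club of separable sub-\CuSgp{s} of $S$, each satisfying the hypotheses of that lemma. By \cref{prp:CompSepDet}, $(m_1+m_2+1)$-comparison is separably determined, so by \cref{prp:ClubWithPropertyCu} it suffices to exhibit a club $\calC \subseteq \Sep(S)$ all of whose members have $(m_1+m_2+1)$-comparison. I will build $\calC$ as a finite intersection of clubs, one for each hypothesis of \cref{prp:CompExtSep}, and then apply that lemma pointwise.

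First I would assemble the ingredient clubs, repeatedly using that a finite intersection of clubs is a club \cite[Proposition~6.2.9]{Far19BookSetThyCAlg}. Since $S$ satisfies \axiomO{5}-\axiomO{7} and these are separably determined (\cref{prp:OSepDet}), the separable sub-\CuSgp{s} of $S$ satisfying \axiomO{5}-\axiomO{7} form a club $\calC_1$. Since $I$ has $m_2$-comparison, \cref{prp:CompSepDet} furnishes a club $\calD_I \subseteq \Sep(I)$ of separable sub-\CuSgp{s} of $I$ with $m_2$-comparison, and \cref{prp:RelateClubsIdl} turns this into a club $\calC_2 := \{ T \in \Sep(S) : I \cap T \in \calD_I \}$. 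Similarly, since $S/I$ has $m_1$-comparison there is a club $\calD_{S/I} \subseteq \Sep(S/I)$ of separable sub-\CuSgp{s} with $m_1$-comparison, and combining this with the club $\calC_0$ of \cref{prp:RelateClubsQuot} gives a club $\calC_3 := \{ T \in \calC_0 : \pi_I(T) \in \calD_{S/I} \}$.

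The main obstacle is to guarantee, for the members of $\calC$, that the ideal $I \cap T$ yields the right quotient. Note that $I \cap T$ is always an ideal of $T$: it is a submonoid closed under suprema of increasing sequences and, as $I$ is downward hereditary in $S$, it is downward hereditary in $T$. However, the canonical surjective \CuMor{} $T/(I\cap T) \to \pi_I(T) \subseteq S/I$ need not be injective, because a witness $w \in I$ for a relation $x \le_I y$ between elements of $T$ need not lie in $I \cap T$; consequently $T/(I \cap T)$ need not inherit $m_1$-comparison from $\pi_I(T)$. To overcome this I would produce one further club $\calC_4$ consisting of those $T$ for which the natural map $T/(I\cap T) \to \pi_I(T)$ is an order-embedding, hence an isomorphism of \CuSgp{s}. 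Its existence follows from a closing-off construction in the spirit of \cref{prp:DivSepDet}: starting from a countable basis, one iteratively adjoins, for each pair $x, y$ in the current basis with $x \le_I y$, an idempotent witness $w \in I$ (available since $\le_I$ admits idempotent witnesses, as recorded before \cref{prp:CompExtSep}), so that at the limit every such witness lies in $I \cap T$; here $\sigma$-completeness is automatic and cofinality follows by the usual approximation argument. Setting $\calC := \calC_1 \cap \calC_2 \cap \calC_3 \cap \calC_4$, every $T \in \calC$ is separable, satisfies \axiomO{5}-\axiomO{7}, has $I \cap T$ an ideal with $m_2$-comparison, and has $T/(I\cap T) \cong \pi_I(T)$ with $m_1$-comparison. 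Thus \cref{prp:CompExtSep} applies and shows that $T$ has $(m_1+m_2+1)$-comparison.

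Finally, since $\calC$ is a club whose members all have $(m_1+m_2+1)$-comparison, and since this property is separably determined, \cref{prp:ClubWithPropertyCu} yields that $S$ itself has $(m_1+m_2+1)$-comparison, as desired.
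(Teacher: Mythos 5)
Your proposal is correct and follows the same overall strategy as the paper: intersect the clubs coming from \cref{prp:OSepDet}, from \cref{prp:RelateClubsIdl} (to get $m_2$-comparison of $I \cap T$), and from \cref{prp:RelateClubsQuot} (to get $m_1$-comparison on the quotient side), apply \cref{prp:CompExtSep} to each member, and conclude by separable determination of $(m_1+m_2+1)$-comparison (\cref{prp:CompSepDet}, \cref{prp:ClubWithPropertyCu}). The one genuine difference is your fourth club $\calC_4$, and it is a sensible addition. The paper's proof asserts, for $T$ in its club $\calC'$, that $T/(I\cap T)$ is isomorphic to $\pi_I(T)$; but taken literally, membership in the club of \cref{prp:RelateClubsQuot} only says that $\pi_I(T)$ is a sub-\CuSgp{} of $S/I$, and, exactly as you observe, this does not by itself make the canonical surjection $T/(I\cap T)\to\pi_I(T)$ injective, since a witness $w\in I$ for a relation $x\le_I y$ between elements of $T$ need not lie in $I\cap T$. (The paper implicitly relies on the omitted closing-off construction behind \cref{prp:RelateClubsQuot} to arrange this.) Your repair is the right one: close off under idempotent witnesses for pairs of basis elements, and then, for arbitrary $t_1\le_I t_2$ in $T$, combine the countably many witnesses attached to basis elements (using sums and suprema of idempotents, which stay in $I\cap T$) into a single witness in $I\cap T$, so that $T/(I\cap T)\to\pi_I(T)$ is an order-embedding and hence an isomorphism. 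In short, your argument matches the paper's proof but makes explicit a step the paper glosses over, which is a point in its favor rather than a defect.
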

\begin{proof}
Let $\pi_I \colon S \to S/I$ denote the quotient map.
Let $\calC$ be the family of separable sub-\CuSgp{s} of $S/I$ that have $m_1$-comparison.
Since $m_1$-comparison is separably determined (\cref{prp:CompSepDet}), it follows from \cref{prp:ClubWithPropertyCu} that $\calC$ is a club.
Similarly, the family $\calD$ of separable sub-\CuSgp{s} of $I$ that have $m_2$-comparison is a club.
By \cref{prp:RelateClubsQuot,prp:RelateClubsIdl}, the collections
\begin{align*}
\calC' 
&:= \big\{T \in \Sep(S) : \pi_I(T) \text{ is a sub-\CuSgp{}}, \pi_I(T)  \in \calC \big\}, \andSep \\
\calD' 
&:= \big\{T \in \Sep(S) : T \cap I \in \calD \big\}
\end{align*}
are clubs in $\Sep(S)$.

By \cref{prp:OSepDet}, the families $\calE_{\axiomO{5}}$, $\calE_{\axiomO{6}}$ and $\calE_{\axiomO{7}}$ of separable sub-\CuSgp{s} of $S$ that satisfy \axiomO{5}, \axiomO{6} and \axiomO{7}, respectively, are clubs.
Set
\[
\calF := \calC' \cap \calD' \cap \calE_{\axiomO{5}} \cap \calE_{\axiomO{6}} \cap \calE_{\axiomO{7}}.
\]
Since the intersection of at most countably many clubs is again a club, it follows that $\calF$ is a club in $\Sep(S)$.

Let $T \in \calF$.
Then $T$ is a separable \CuSgp{} satisfying \axiomO{5}-\axiomO{7}, and $T \cap I$ is an ideal in $T$ such that $T/(I \cap T)$ has $m_1$-comparison (since it is isomorphic to $\pi_I(T)$ and $T \in \calC'$) and such that $I \cap T$ has $m_2$-comparsion (since $T \in \calD'$).
By \cref{prp:CompExtSep}, $T$ has $(m_1+m_2+1)$-comparison.
Since $\calF$ is a club in $S$, and since $(m_1+m_2+1)$-comparison is separably determined, it follows that $S$ has $(m_1+m_2+1)$-comparison.
\end{proof}

We now turn to divisibility properties of extensions.

\begin{lma}
\label{prp:LiftDiv}
Let $S$ be a separable \CuSgp{} satisfying \axiomO{5}-\axiomO{8}, and let~$I$ be an ideal in~$S$, and assume that $S/I$ is $n$-almost divisible.
Let $x' \ll x$ in $S$, and let $k\in\NN$.
Then there exist $u \in S$ and an idempotent $w \in I$ such that
\[
ku \ll x, \andSep
x' \ll (k+1)(n+1)u + w.
\]
\end{lma}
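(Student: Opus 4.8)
The plan is to reduce the problem to the quotient $S/I$, apply its $n$-almost divisibility there, and then lift the resulting divisor back to $S$, tracking the unavoidable error coming from $I$ through a single idempotent. First I would use that $\ll$ is interpolative in a \CuSgp{} to pick $x' \ll x_1 \ll x_2 \ll x$. Since the quotient map $\pi_I \colon S \to S/I$ is a \CuMor{}, it preserves $\ll$, so $\pi_I(x_1) \ll \pi_I(x_2)$ in $S/I$. Applying $n$-almost divisibility of $S/I$ to this pair and to $k$, I obtain $\overline{y} \in S/I$ with $k\overline{y} \leq \pi_I(x_2)$ and $\pi_I(x_1) \leq (k+1)(n+1)\overline{y}$.

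Next I would lift $\overline{y}$ to some $y \in S$ with $\pi_I(y) = \overline{y}$. Unwinding the definition of $\leq_I$, and using the remark that the error term may be taken idempotent (by passing to its $\infty$-multiple), the two inequalities in $S/I$ become
\[
ky \leq x_2 + w \andSep x_1 \leq (k+1)(n+1)y + w
\]
for a single idempotent $w \in I$ (combine the two errors into one). By \cite[Theorem~2.5(i),(ii)]{AntPerRobThi21Edwards}, which applies since $S$ is separable and satisfies \axiomO{7}, the assignment $z \mapsto z \wedge w$ is an additive generalized \CuMor{} into the ideal generated by $w$; in particular $ky \wedge w = k(y \wedge w)$, and the first inequality improves to $ky \leq x_2 + k(y\wedge w)$.

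The crux of the argument is then to replace $y$ by a \emph{reduced} element $u$ that discards the portion of $y$ lying in the ideal generated by $w$, so that the inflated copy $ku$ fits below $x$ with no ideal excess. Concretely, I would aim to construct $u \in S$ with $\pi_I(u) = \overline{y}$ and $ku \leq x_2$. The element $y \wedge w \in I$ is exactly the part of $y$ responsible for the excess $k(y\wedge w)$ over $x_2$, and since it lies in $I$ it is invisible in $S/I$; the point is to peel it off while preserving the class of $y$ modulo $I$. I expect this to require a Riesz-type decomposition (\axiomO{5}) of $y$ against the splitting $ky \leq x_2 + k(y\wedge w)$, carried out compatibly with the $k$-fold sum, together with the infima furnished by \axiomO{7}--\axiomO{8} and an appeal to separability and \axiomO{1} to arrange the approximants into an increasing sequence whose supremum is the desired $u$. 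This distribution of the decomposition across the $k$-fold multiple --- in effect a division below $x$, which cannot be performed in $S$ itself but should become possible once the $I$-excess has been isolated --- is the step I expect to be the main obstacle.

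Granting such a $u$, the conclusion follows quickly. Since $ku \leq x_2 \ll x$, we get $ku \ll x$. From $\pi_I(u) = \overline{y} = \pi_I(y)$ we obtain $y \leq u + s$ for some $s \in I$, whence
\[
x_1 \leq (k+1)(n+1)y + w \leq (k+1)(n+1)u + \big((k+1)(n+1)s + w\big).
\]
Setting $\widetilde{w} := \infty\big((k+1)(n+1)s + w\big)$, which is an idempotent of $I$, gives $x_1 \leq (k+1)(n+1)u + \widetilde{w}$; since $x' \ll x_1$, this finally yields $x' \ll (k+1)(n+1)u + \widetilde{w}$, as required.
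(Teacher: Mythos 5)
Your skeleton matches the paper's: interpolate below $x$, push to the quotient, apply $n$-almost divisibility there, lift the divisor to some $y \in S$ with a single idempotent error $w \in I$, and then repair the first inequality by constructing a corrected element $u$. The final bookkeeping (absorbing the extra ideal error into an idempotent and using $x' \ll x_1$) is also fine. But the decisive step is missing: you yourself flag the construction of the ``reduced'' element $u$ as ``the step I expect to be the main obstacle'' and offer only a guess at what it should require (\axiomO{5}, infima from \axiomO{7}--\axiomO{8}, separability, a sequential limit). That construction \emph{is} the content of the lemma; everything before and after it is routine. The paper fills exactly this gap by invoking \cite[Proposition~7.8]{ThiVil24NowhereScattered}: after choosing $y'' \ll y' \ll y$ with $x' \ll (k+1)(n+1)y'' + w$, the relations $ky' \ll x + w$ and $y'' \ll y'$ produce $u \in S$ with $ku \ll x$, $y'' \ll u + w$, and $u \ll y' + w$; idempotency of $w$ then gives $x' \ll (k+1)(n+1)y'' + w \leq (k+1)(n+1)u + w$. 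Without this proposition, or a proved substitute for it, your argument is not a proof.

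Moreover, the target you set for the missing step is stronger than needed and not clearly attainable: you ask for a lift $u$ of $\overline{y}$ with $\pi_I(u) = \overline{y}$ \emph{exactly} and $ku \leq x_2$ in $S$. Cu-semigroups have no cancellation, so one cannot in general ``peel off'' the portion $y \wedge w$ from $y$ while preserving the class modulo $I$ on the nose; inequalities in $S/I$ only lift up to an error in $I$, which is the whole difficulty. This is precisely why the paper's cited proposition delivers only the approximate relations $y'' \ll u + w$ and $u \ll y' + w$ (agreement with $y$ modulo $w$, up to way-below perturbations), which is all the conclusion requires. Your intermediate improvement of $ky \leq x_2 + w$ to $ky \leq x_2 + k(y \wedge w)$ via \cite[Theorem~2.5]{AntPerRobThi21Edwards} is correct, but it only \emph{isolates} the $I$-excess; removing it compatibly with the $k$-fold sum is the real obstacle, and that is left unresolved in your proposal.
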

\begin{proof}
Find $x'' \in S$ such that $x' \ll x'' \ll x$.
Since the quotient map $S \to S/I$ is a \CuMor{}, the image of $x''$ is way-below the image of $x$ in $S/I$.
Using that $S/I$ is $n$-almost divisible, we find $y \in S$ such that
\[
ky \leq_I x, \andSep
x'' \leq_I (k+1)(n+1)y.
\]
This gives idempotent elements $w_1,w_2 \in I$ such that
\[
ky \leq x + w_1, \andSep
x'' \leq (k+1)(n+1)y + w_2.
\]
Set $w:=w_1+w_2$. Using that $x' \ll x''$, we then have
\[
ky \leq x + w, \andSep
x' \ll (k+1)(n+1)y + w.
\]
Choose $y'',y' \in S$ such that
\[
y'' \ll y' \ll y, \andSep 
x' \ll (k+1)(n+1)y'' + w.
\]
Then $ky' \ll x + w$ and $y'' \ll y'$, which allows us to apply \cite[Proposition~7.8]{ThiVil24NowhereScattered} to obtain $u \in S$ such that
\[
ku \ll x, \quad
y'' \ll u + w, \andSep
u \ll y' + w.
\]
Using that $w$ is idempotent, we get
\[
x' \ll (k+1)(n+1)y'' + w
\leq (k+1)(n+1)u + w,
\]
as desired.
\end{proof}

\begin{lma}
\label{prp:DivExtSep}
Let $S$ be a separable \CuSgp{} satisfying \axiomO{5}-\axiomO{8}, and let~$I$ be an ideal in~$S$.
Assume that $S/I$ is $n_1$-almost divisible, and that $I$ is $n_2$-almost divisible.
Then $S$ is $n$-almost divisible for $n=\max\{2n_1+1,2n_2+1\}$.
\end{lma}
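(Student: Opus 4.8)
The plan is to dispatch the quotient direction and the ideal direction separately, using \cref{prp:LiftDiv} for the former and the $n_2$-almost divisibility of $I$ for the latter, and then to merge the two resulting divisors into a single element lying below $x$. I would open by recalling the formulation of \axiomO{5}, since it is what makes available both the infima with idempotents from \cite[Theorem~2.4, Theorem~2.5]{AntPerRobThi21Edwards} and the gluing from \cite[Proposition~7.8]{ThiVil24NowhereScattered} on which the argument relies. Fix $x' \ll x$ in $S$ and $k \in \NN$; the goal is to produce $y \in S$ with $ky \leq x$ and $x' \leq (k+1)(n+1)y$, where $n = \max\{2n_1+1,2n_2+1\}$, equivalently $n+1 = 2\max\{n_1+1,n_2+1\}$.

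First I would handle the quotient. Choosing $x' \ll x'' \ll x$ and applying \cref{prp:LiftDiv} (with the $n_1$-almost divisibility of $S/I$) to the pair $(x'',x)$ yields $u \in S$ and an idempotent $w \in I$ with
\[
ku \ll x, \andSep x'' \ll (k+1)(n_1+1)u + w .
\]
The element $w$ is the error sitting in $I$, and the next step is to trade it for an element below $x$: since $S$ is separable and satisfies \axiomO{5}-\axiomO{7}, the infimum $p := x'' \wedge w$ exists in $I$ and satisfies $p \leq x$, and the absorption identity \cite[Theorem~2.5~(ii)]{AntPerRobThi21Edwards} applied to $x'' \leq (k+1)(n_1+1)u + w$ gives $x'' \leq (k+1)(n_1+1)u + p$. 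By transitivity of $\ll$ this keeps $x' \ll (k+1)(n_1+1)u + p$.

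Next I would divide in the ideal. Because $p \leq x$ lies in $I$, applying the $n_2$-almost divisibility of $I$ to a way-below pair beneath $p$ produces $v \in I$ with $kv \leq p \leq x$ and, after the usual passage to cofinal way-below elements, $x' \leq (k+1)(n_1+1)u + (k+1)(n_2+1)v$. At this stage both $ku$ and $kv$ lie below $x$ and $x'$ is controlled by $(k+1)(n_1+1)u + (k+1)(n_2+1)v$; bounding the two coefficients by $\max\{n_1+1,n_2+1\}$ already rewrites this as $x' \leq (k+1)\max\{n_1+1,n_2+1\}(u+v)$.

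The crux is to merge $u$ and $v$ into a single divisor of $x$: the sum $k(u+v) = ku + kv$ need not lie below $x$, only below $x + w$ (using $p \leq w$ and the idempotency of $w$). Here I would invoke \cite[Proposition~7.8]{ThiVil24NowhereScattered} --- the same gluing tool used inside \cref{prp:LiftDiv}, which needs \axiomO{5}-\axiomO{8} --- to pass from $k(u+v) \ll x + w$ to an element $y \in S$ with $ky \ll x$ that dominates $u+v$ modulo the idempotent $w$. The hard part will be closing the final estimate: relating multiples of $u+v$ back to multiples of $y$ through \cite[Proposition~7.8]{ThiVil24NowhereScattered} reintroduces an idempotent residue, and one block of $\max\{n_1+1,n_2+1\}$ copies of $y$ is spent covering this residue while a second identical block covers $(k+1)\max\{n_1+1,n_2+1\}(u+v)$. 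The two blocks together yield $x' \leq (k+1)\,2\max\{n_1+1,n_2+1\}\,y = (k+1)(n+1)y$, which is precisely where the doubling in $n = \max\{2n_1+1,2n_2+1\}$ is forced; making this absorption rigorous, via the idempotency of $w$ (so that $mw = w$ for all $m \geq 1$) and \axiomO{8}, is the main obstacle.
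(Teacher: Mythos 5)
Your opening moves match the paper's: apply \cref{prp:LiftDiv} to handle the quotient, use the infima with idempotents from \cite[Theorems~2.4 and~2.5]{AntPerRobThi21Edwards} to trade the error term $w$ for $x''\wedge w$, and then divide in the ideal. But the proof breaks down exactly at the point you flag as ``the main obstacle,'' and this is not a technical wrinkle to be smoothed out afterwards --- it is the central difficulty of the lemma, and your proposed route around it is circular. After your step in the ideal you hold $ku \ll x$ and $kv \leq x$ \emph{separately}, and you need a single $y$ with $ky \leq x$. Applying \cite[Proposition~7.8]{ThiVil24NowhereScattered} to $k(u+v) \ll x+w$ does produce $y$ with $ky \ll x$, but that proposition only recovers $u+v$ \emph{modulo} $w$: one gets $(u+v)' \ll y + w$ for way-below approximations $(u+v)'$, never an inequality of the form $u+v \leq Cy$. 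Consequently, converting $x' \leq (k+1)M(u+v)$ (with $M = \max\{n_1+1,n_2+1\}$) into a bound by multiples of $y$ always reintroduces the summand $w$, and the residue $x'\wedge w$ can only be estimated through $v$, which is itself only controlled by $y+w$ --- you are chasing your own tail. Your plan to spend ``a second block of $M$ copies of $y$'' on the residue presupposes precisely the inequality $x'\wedge w \leq (k+1)M y$ that cannot be established this way.

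The missing idea is that room for the ideal divisor must be reserved inside $x$ \emph{before} the ideal division takes place; this is also where the doubling in $n=\max\{2n_1+1,2n_2+1\}$ actually originates. The paper applies \cref{prp:LiftDiv} with denominator $2k$ rather than $k$, obtaining $z$ and an idempotent $w \in I$ with $2kz \ll x$ and $x' \ll (2k+1)(n_1+1)z + w$, and then invokes \axiomO{5} (almost algebraic order) on $kz' + kz \leq x$, with $kz'' \ll kz' \ll kz$, to extract a complement $c$ satisfying $kz'' + c \leq x \leq kz' + c$, hence $x \leq 2c$. The ideal division is then performed on $c \wedge w$ (not on $x''\wedge w$ as in your attempt): additivity of $s \mapsto s\wedge w$ gives $x'\wedge w \leq (2c)\wedge w = 2(c\wedge w)$, which is where the factor $2$ enters, and $n_2$-almost divisibility of $I$ yields $v$ with $kv \leq c\wedge w$ controlling that residue. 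Setting $d := z'' + v$, the two pieces now fit side by side inside $x$ on the nose, $kd = kz'' + kv \leq kz'' + (c\wedge w) \leq kz'' + c \leq x$, with no leftover $w$, and the estimate $(2k+1)(n_1+1)z'' + 2(k+1)(n_2+1)v \leq (k+1)(n+1)d$ closes the argument. In short, the correct mechanism is a complement extracted via \axiomO{5} combined with the $2k$-trick; \cite[Proposition~7.8]{ThiVil24NowhereScattered} is only suited for use inside \cref{prp:LiftDiv}, where errors modulo the idempotent $w$ are still acceptable.
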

\begin{proof}
Let $x'',x \in S$ with $x'' \ll x$, and let $k \in \NN$. 
We need to find $d \in S$ such that $kd \leq x$ and $x'' \leq (k+1)(n+1)d$.
Pick $x' \in S$ such that $x'' \ll x' \ll x$.
Applying \cref{prp:LiftDiv} for $2k$ and $x' \ll x$, we obtain $z \in S$ and an idempotent $w \in I$ such that
\begin{equation}
\label{eq:DivExt-1}
2kz \ll x, \andSep
x' \ll (2k+1)(n_1+1)z + w.
\end{equation}
Find $z'',z' \in S$ such that
\[
z'' \ll z' \ll z, \andSep
x' \ll (2k+1)(n_1+1)z'' + w.
\]

The `almost algebraic order' property \axiomO{5} means that for $r',r,s',s,t \in S$ with $r+s \leq t$ and $r' \ll r$, $s' \ll s$, there exists $u \in S$ such that $r' + u \leq t \leq r+u$ and $s' \ll u$.
In our setting, we have
\[
kz' + kz \leq x, \quad
kz'' \ll kz', \andSep
kz' \ll kz.
\]
Applying \axiomO{5}, we obtain $u \in S$ such that
\[
kz'' + u \leq x \leq kz' + u, \andSep
kz' \ll u.
\]
Then
\[
x \leq 2u.
\]

As in the proof of \cref{prp:CompExtSep}, we use that infima between $w$ and arbitrary elements in $S$ exist.
By applying \cite[Theorem~2.5~(ii)]{AntPerRobThi21Edwards} to \eqref{eq:DivExt-1} at the first step, and by using that $s \mapsto (s\wedge w)$, is a generalized \CuMor{} (\cite[Theorem~2.5~(i)]{AntPerRobThi21Edwards}) at the second step, we get
\[
x' 
\leq (2k+1)(n+1)z'' + (x' \wedge w)
\leq (2k+1)(n+1)z'' + 2(u \wedge w).
\]
Choose $u' \in S$ such that
\[
u' \ll (u \wedge w), \andSep
x'' \leq (2k+1)(n+1)z'' + 2u'.
\]

Using that $I$ is $n_2$-almost divisible for $k$ and $u' \ll (u \wedge w)$, we obtain $v \in I$ such that
\[
kv \leq u \wedge w, \andSep
u' \leq (k+1)(n_2+1)v.
\]
Set $d:=z''+v$.
Then
\[
kd 
= kz'' + kv
\leq kz'' + (u \wedge w)
\leq kz'' + u
\leq x.
\]
Further,
\begin{align*}
x''
&\leq (2k+1)(n_1+1)z'' + 2u' \\
&\leq 2(k+1)(n_1+1)z'' + 2(k+1)(n_2+1)v \\
&\leq (k+1)(n+1)z''+(k+1)(n+1)v \\
&= (k+1)(n+1)d.
\end{align*}
This shows that $d$ has the desired properties.
\end{proof}

\begin{prp}
\label{prp:DivExt}
Let $S$ be a \CuSgp{} satisfying \axiomO{5}-\axiomO{8}, and let~$I$ be an ideal in~$S$.
Assume that $S/I$ is $n_1$-almost divisible, and that $I$ is $n_2$-almost divisible.
Then $S$ is $n$-almost divisible for $n=\max\{2n_1+1,2n_2+1\}$.
\end{prp}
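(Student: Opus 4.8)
The plan is to mirror the proof of \cref{prp:CompExt} almost verbatim, reducing the nonseparable statement to the separable case \cref{prp:DivExtSep} through an intersection of clubs. The genuine mathematical content already resides in \cref{prp:DivExtSep,prp:LiftDiv}; what remains is a separability reduction organized exactly as in the comparison setting.

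First I would let $\pi_I \colon S \to S/I$ denote the quotient map and produce the relevant clubs. Since $n_1$-almost divisibility is separably determined (\cref{prp:DivSepDet}), \cref{prp:ClubWithPropertyCu} gives that the family of separable sub-\CuSgp{s} of $S/I$ that are $n_1$-almost divisible is a club $\calC \subseteq \Sep(S/I)$; similarly the $n_2$-almost divisible separable sub-\CuSgp{s} of $I$ form a club $\calD \subseteq \Sep(I)$. Invoking \cref{prp:RelateClubsQuot,prp:RelateClubsIdl}, I would pull these back to the clubs
\[
\calC' := \big\{T \in \Sep(S) : \pi_I(T) \text{ is a sub-\CuSgp{}},\ \pi_I(T) \in \calC \big\}, \andSep
\calD' := \big\{T \in \Sep(S) : T \cap I \in \calD \big\}
\]
in $\Sep(S)$. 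By \cref{prp:OSepDet}, each of \axiomO{5}-\axiomO{8} is separably determined, so the families $\calE_{\axiomO{5}}, \calE_{\axiomO{6}}, \calE_{\axiomO{7}}, \calE_{\axiomO{8}}$ of separable sub-\CuSgp{s} satisfying the respective axiom are clubs. Since an intersection of at most countably many clubs is again a club, the set
\[
\calF := \calC' \cap \calD' \cap \calE_{\axiomO{5}} \cap \calE_{\axiomO{6}} \cap \calE_{\axiomO{7}} \cap \calE_{\axiomO{8}}
\]
is a club in $\Sep(S)$.

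Next I would verify the hypotheses of \cref{prp:DivExtSep} on each member of $\calF$. Given $T \in \calF$, the sub-\CuSgp{} $T$ is separable and satisfies \axiomO{5}-\axiomO{8}; its ideal $T \cap I$ is $n_2$-almost divisible because $T \in \calD'$; and its quotient $T/(T \cap I)$ is isomorphic (as a \CuSgp{}) to $\pi_I(T)$, which lies in $\calC$ and is therefore $n_1$-almost divisible because $T \in \calC'$. By \cref{prp:DivExtSep}, each such $T$ is $n$-almost divisible for $n = \max\{2n_1+1, 2n_2+1\}$. Finally, since $\calF$ is a club and $n$-almost divisibility is separably determined, hence passes to inductive limits, viewing $\calF$ as an inductive system indexed over itself with limit $S$ yields that $S$ is $n$-almost divisible, as desired.

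The proof presents no real obstacle: the only point requiring care is confirming that $T/(T\cap I)$ is genuinely isomorphic to $\pi_I(T)$ as a \CuSgp{}, so that $n_1$-almost divisibility of $\pi_I(T)$ transfers to the abstract quotient appearing in \cref{prp:DivExtSep}. This identification is precisely what \cref{prp:RelateClubsQuot} is designed to supply, and the remainder is routine club-intersection bookkeeping identical to that in \cref{prp:CompExt}.
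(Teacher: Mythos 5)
Your proposal is correct and takes exactly the approach the paper intends: the paper's own proof of this proposition consists of the single remark that it is ``proved analogous to \cref{prp:CompExt}'' with details omitted, and your write-up supplies precisely that analogy --- the club-intersection reduction to \cref{prp:DivExtSep}, including the one genuine adjustment needed, namely adding the club $\calE_{\axiomO{8}}$ since the separable divisibility lemma requires \axiomO{5}-\axiomO{8} rather than \axiomO{5}-\axiomO{7}.
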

\begin{proof}
This is proved analogous to \cref{prp:CompExt}.
We omit the details.
\end{proof}

\begin{lma}
\label{prp:PureIdlQuot}
Let $I$ be an ideal in a \CuSgp{} $S$.
If $S$ has $m$-comparison, then so do $I$ and $S/I$.
If $S$ is $n$-almost divisible, then so are $I$ and $S/I$.
\end{lma}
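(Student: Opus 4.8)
The plan is to treat the four assertions separately, handling the ideal cases directly from the induced order and the quotient cases by lifting along the surjective \CuMor{} $\pi_I \colon S \to S/I$. Throughout I use the standard facts about the ideal and quotient constructions (\cite{AntPerThi18TensorProdCu}): that $I$ is a sub-\CuSgp{} of $S$ carrying the induced order, so that $\leq$, and hence $<_s$, agree whether computed in $I$ or in $S$, and that $\ll$ in $I$ implies $\ll$ in $S$; that $I$ is downward hereditary; and that $\pi_I$ is a surjective generalized \CuMor{}, in particular preserving suprema of increasing sequences.

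For $m$-comparison of $I$: given $x,y_0,\ldots,y_m \in I$ with $x <_s y_j$, these relations hold verbatim in $S$, so $m$-comparison of $S$ gives $x \leq y_0+\ldots+y_m$, an inequality between elements of $I$ which therefore holds in $I$. For $n$-almost divisibility of $I$: given $x' \ll x$ in $I$ and $k \in \NN$, the relation $x' \ll x$ holds in $S$, so $n$-almost divisibility of $S$ yields $y \in S$ with $ky \leq x$ and $x' \leq (k+1)(n+1)y$. When $k \geq 1$ we have $y \leq ky \leq x \in I$, so $y \in I$ by heredity; the case $k=0$ is trivial, taking $y = x \in I$. In either case the required inequalities transfer back to $I$.

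The quotient case of divisibility is the cleaner of the two. Given $\bar x' \ll \bar x$ in $S/I$ and $k \in \NN$, I first lift the relation to $S$. Choosing $x \in S$ with $\pi_I(x) = \bar x$ and writing $x = \sup_n x_n$ with $x_n \ll x$ (property \axiomO{2}), the identity $\bar x = \sup_n \pi_I(x_n)$ combined with $\bar x' \ll \bar x$ yields an index $n$ with $\bar x' \leq \pi_I(x_n)$; setting $x' := x_n$ gives $x' \ll x$ in $S$ and $\pi_I(x') \geq \bar x'$. Applying $n$-almost divisibility of $S$ produces $y$ with $ky \leq x$ and $x' \leq (k+1)(n+1)y$, and pushing these through $\pi_I$ shows that $\bar y := \pi_I(y)$ satisfies $k\bar y \leq \bar x$ and $\bar x' \leq \pi_I(x') \leq (k+1)(n+1)\bar y$.

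The most delicate step, and where I expect the (still modest) real work to lie, is $m$-comparison of the quotient. Given $\bar x <_s \bar y_j$ in $S/I$ for $j=0,\ldots,m$, fix lifts $x,y_j \in S$ and integers $n_j$ with $(n_j+1)\bar x \leq n_j\bar y_j$; unwinding $\leq_I$ and using that it may be witnessed by idempotents (as noted after the construction of $S/I$) gives idempotents $w_j \in I$ with $(n_j+1)x \leq n_j y_j + w_j$. If some $n_j = 0$ then $\bar x = 0$ and there is nothing to prove, so assume all $n_j \geq 1$. Setting $w := w_0 + \ldots + w_m$, which is again an idempotent of $I$, the estimate $(n_j+1)x \leq n_j y_j + w \leq n_j(y_j + w)$ shows $x <_s (y_j + w)$ in $S$ for each $j$. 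Now $m$-comparison of $S$ yields
\[
x \leq \sum_{j=0}^{m}(y_j + w) = y_0 + \ldots + y_m + (m+1)w = y_0 + \ldots + y_m + w,
\]
the last equality using idempotency of $w$. This says exactly that $\bar x \leq \bar y_0 + \ldots + \bar y_m$ in $S/I$. The points requiring care are the reduction to $n_j \geq 1$, the passage from $n_j y_j + w$ to $n_j(y_j+w)$, and the collapse $(m+1)w = w$, all of which rest on $w$ being idempotent.
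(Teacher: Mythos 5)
Your proof is correct and follows essentially the same route as the paper's: for the quotient, you lift the $<_s$-relations to $S$ with ideal witnesses absorbed into the $y_j$'s and apply $m$-comparison in $S$, and you lift $\ll$ via the fact that $\pi_I$ preserves suprema of increasing sequences and apply divisibility in $S$; the ideal case is read off directly from the induced order. The only differences are cosmetic — you use a single idempotent witness $w$ where the paper uses separate (not necessarily idempotent) witnesses $w_j$, and you spell out edge cases the paper leaves implicit (the reduction to $n_j \geq 1$, and $y \in I$ by heredity when $k \geq 1$).
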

\begin{proof}
It follows directly from the definitions that $m$-comparison and $n$-almost divisibility pass to ideals.
Assuming that $S$ has $m$-comparison, let us verify that~$S/I$ has $m$-comparison as well.
Let $x,y_0,\ldots,y_m \in S$ be elements such that the image of $x$ in $S/I$ is $<_s$-below the image of $y_j$ in $S/I$ for each $j$.
For each $j$, we obtain $n_j \in \NN$ and $w_j \in I$ such that
\[
(n_j+1)x \leq n_j y_j + w_j.
\]
Then $(n_j+1)x \leq n_j(y_j+w_j)$ and so $x<_s(y_j+w_j)$.
Using that $S$ has $m$-comparison, it follows that 
\[
x \leq (y_0+w_0)+\ldots+(y_m+w_m) = (y_0+\ldots+y_m)+(w_0+\ldots+w_m).
\]
Since $w_0+\ldots+w_m$ belongs to $I$, we get $x \leq_I y_0+\ldots+y_m$, as desired.

Assuming that $S$ is $n$-almost divisible, let us verify that~$S/I$ is $n$-almost divisible.
Let $\pi_I \colon S \to S/I$ denote the quotient map, and let $u',u \in S/I$ with $u' \ll u$, and let $k\in\NN$.
Pick a lift $x \in S$ of $u$.
Using that $\pi_I$ preserves suprema of increasing sequences, we find $x' \in S$ with $x' \ll x$ and $u' \leq \pi_I(x')$.
Using that $S$ is $n$-almost divisible, we obtain $y \in S$ such that $ky \leq x$ and $x' \leq (k+1)(n+1)y$.
Then $\pi_I(y)$ has the desired properties.
\end{proof}

\begin{thm}
\label{prp:PureExtCu}
Let $S$ be a \CuSgp{} satisfying \axiomO{5}-\axiomO{8}, and let~$I$ be an ideal in~$S$.
Then $S$ is $(m,n)$-pure for some $m,n\in\NN$ if and only if $I$ is $(m',n')$-pure for some $m',n'\in\NN$ and $S/I$ is $(m'',n'')$-pure for some $m'',n''\in\NN$.
\end{thm}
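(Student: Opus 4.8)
The plan is to prove both implications by assembling the extension lemmas established above, treating $m$-comparison and $n$-almost divisibility independently and then combining the resulting degrees. Since the theorem only asserts $(m,n)$-pureness \emph{for some} $m,n$, there is no need to track sharp constants; it suffices to exhibit explicit (non-optimal) values in each direction.

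For the forward direction, I would assume that $S$ is $(m,n)$-pure, so that $S$ has $m$-comparison and is $n$-almost divisible. Applying \cref{prp:PureIdlQuot} directly yields that both $I$ and $S/I$ inherit $m$-comparison and $n$-almost divisibility, hence both are $(m,n)$-pure. This direction uses none of the axioms \axiomO{5}--\axiomO{8} and produces the same degrees $(m,n)$ for the ideal and the quotient.

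For the backward direction, suppose $I$ is $(m',n')$-pure and $S/I$ is $(m'',n'')$-pure. I would handle comparison and divisibility separately. Since $S/I$ has $m''$-comparison and $I$ has $m'$-comparison, \cref{prp:CompExt} gives that $S$ has $(m'+m''+1)$-comparison. Since $S/I$ is $n''$-almost divisible and $I$ is $n'$-almost divisible, \cref{prp:DivExt} gives that $S$ is $n$-almost divisible for $n=\max\{2n'+1,2n''+1\}$. Setting $m:=m'+m''+1$ and $n:=\max\{2n'+1,2n''+1\}$, these two facts together show that $S$ is $(m,n)$-pure. Here the hypotheses \axiomO{5}--\axiomO{8} enter precisely because they are the standing assumptions of \cref{prp:CompExt} (which needs \axiomO{5}--\axiomO{7}) and \cref{prp:DivExt} (which needs \axiomO{5}--\axiomO{8}).

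As all the analytic content has already been front-loaded into \cref{prp:CompExt,prp:DivExt} --- which in turn rest on the separable cases \cref{prp:CompExtSep,prp:DivExtSep} and the club-theoretic reductions of \cref{sec:SepDet} --- the only remaining work is bookkeeping of constants, so I do not expect a genuine obstacle. The one point worth emphasizing is that the \emph{degree} of pureness is not preserved: the extension construction raises the comparison degree by one and roughly doubles the divisibility degree, which is why the statement is phrased with an unspecified pair $(m,n)$ rather than claiming that $S$ is $(0,0)$-pure whenever $I$ and $S/I$ are. At the level of \CuSgp{s} this loss is intrinsic; the improvement back to genuine pureness is available only for \ca{s}, via the reduction phenomenon of \cite[Theorem~5.7]{AntPerThiVil24arX:PureCAlgs}, and is exploited in the proof of \cref{prp:PureExt} rather than here.
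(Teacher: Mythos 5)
Your proposal is correct and follows essentially the same route as the paper: the forward implication via \cref{prp:PureIdlQuot}, and the backward implication by combining \cref{prp:CompExt} (yielding $(m'+m''+1)$-comparison) with \cref{prp:DivExt} (yielding $\max\{2n'+1,2n''+1\}$-almost divisibility). The paper states this in one line; your write-up just makes the constants and the role of \axiomO{5}--\axiomO{8} explicit.
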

\begin{proof}
The forward implication follows from \cref{prp:PureIdlQuot}, and the backward implication follows from \cref{prp:CompExt,prp:DivExt}.
\end{proof}

\begin{thm}
\label{prp:PureExt}
Let $I$ be a closed ideal in a \ca{} $A$.
Then $A$ is pure if and only if~$I$ and~$A/I$ are pure.
\end{thm}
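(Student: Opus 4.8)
The plan is to transfer the statement to its \CuSgp-level counterpart, \cref{prp:PureExtCu}, through the standard identification of the Cuntz semigroup of an extension. Recall that for a closed ideal $I$ in a \ca{} $A$, the inclusion $I \hookrightarrow A$ identifies $\Cu(I)$ with an ideal of $\Cu(A)$, and the quotient map $A \to A/I$ induces an isomorphism $\Cu(A)/\Cu(I) \cong \Cu(A/I)$; this is a well-known consequence of the exactness behavior of the Cuntz semigroup functor (compatible with the quotient construction recalled from \cite[Section~5.1]{AntPerThi18TensorProdCu}). Throughout I will accordingly regard $\Cu(I)$ as an ideal of $\Cu(A)$ and $\Cu(A/I)$ as $\Cu(A)/\Cu(I)$. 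I will also use that the Cuntz semigroup of any \ca{} satisfies \axiomO{5}-\axiomO{8}, as recalled at the beginning of \cref{sec:CompDivExt}, so that the extension results of this section apply to $\Cu(A)$.

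For the forward implication, I would assume that $A$ is pure, so that $\Cu(A)$ is $(0,0)$-pure. Applying \cref{prp:PureIdlQuot} to the ideal $\Cu(I) \subseteq \Cu(A)$, both $\Cu(I)$ and the quotient $\Cu(A)/\Cu(I) \cong \Cu(A/I)$ inherit $0$-comparison and $0$-almost divisibility, hence are $(0,0)$-pure. By definition this says precisely that $I$ and $A/I$ are pure.

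For the converse, suppose $I$ and $A/I$ are pure, so that $\Cu(I)$ and $\Cu(A/I) \cong \Cu(A)/\Cu(I)$ are $(0,0)$-pure. Since $\Cu(A)$ satisfies \axiomO{5}-\axiomO{8}, I would invoke \cref{prp:CompExt} with $m_1=m_2=0$ to conclude that $\Cu(A)$ has $1$-comparison, and \cref{prp:DivExt} with $n_1=n_2=0$ to conclude that $\Cu(A)$ is $1$-almost divisible. Thus $\Cu(A)$ is $(1,1)$-pure, that is, $A$ is a $(1,1)$-pure \ca{}. Finally, by the reduction phenomenon recalled in \cref{rmk:MnPureImpPure} (\cite[Theorem~5.7]{AntPerThiVil24arX:PureCAlgs}), any \ca{} that is $(m,n)$-pure for some $m,n \in \NN$ is automatically pure, so $A$ is pure.

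The only input not already packaged in the \CuSgp-level results is the identification of the Cuntz semigroup of the extension with the ideal–quotient pair of $\Cu(A)$, and this is the step I expect to require the most care: one must check that $\Cu(I)$ embeds as a genuine \emph{ideal} of $\Cu(A)$ (and not merely a sub-\CuSgp), and that the canonical map $\Cu(A)/\Cu(I) \to \Cu(A/I)$ is an isomorphism of \CuSgp{s}. Once this identification is in hand, the argument is entirely formal, with the apparent loss in the `degree of pureness' from $(0,0)$ to $(1,1)$ harmlessly absorbed by the reduction phenomenon.
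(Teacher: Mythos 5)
Your proposal is correct and follows essentially the same route as the paper's proof: identify $\Cu(I)$ as an ideal of $\Cu(A)$ with $\Cu(A/I) \cong \Cu(A)/\Cu(I)$ (the paper cites \cite{CiuRobSan10CuIdealsQuot} for this, which is the reference you should add), deduce the forward direction from \cref{prp:PureIdlQuot}, and for the converse apply \cref{prp:CompExt,prp:DivExt} to get $(1,1)$-pureness of $\Cu(A)$ followed by the reduction phenomenon of \cite[Theorem~5.7]{AntPerThiVil24arX:PureCAlgs}. No gaps; the step you flag as delicate is indeed the only external input, and it is handled in the literature exactly as you describe.
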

\begin{proof}
By \cite{CiuRobSan10CuIdealsQuot}, we can identify $\Cu(I)$ with an ideal in $\Cu(A)$ such that $\Cu(A/I)$ is naturally isomorphic to $\Cu(A)/\Cu(I)$;
see also \cite[Section~5.1]{AntPerThi18TensorProdCu}.
Therefore, the forward implication follows from \cref{prp:PureIdlQuot}.

For the backward implications, assume that $I$ and $A/I$ are pure. Since, as mentioned before, Cuntz semigroups of \ca{s} always satisfy \axiomO{5}-\axiomO{8}, we may apply \cref{prp:CompExt,prp:DivExt} to deduce that $\Cu(A)$ is $(1,1)$-pure. By \cite[Theorem~5.7]{AntPerThiVil24arX:PureCAlgs}, it follows that $A$ is pure.
\end{proof}

\begin{rmk}
Let $I$ be a closed ideal in a \ca{} $A$.
By \cref{prp:PureIdlQuot}, if $A$ has $0$-comparison (that is, strict comparison of positive elements by quasitraces), then so do $I$ and $A/I$.
Similarly, if $A$ is $0$-almost divisible, then so are $I$ and $A/I$.

In the converse direction, applying \cref{prp:CompExt} and \cref{prp:DivExt}, and arguing as in the proof of \cref{prp:PureExt}, we get:
\begin{enumerate}
\item
If $I$ and $A/I$ have $0$-comparison, then $A$ has $1$-comparison.
\item
If $I$ and $A/I$ are $0$-almost divisible, then $A$ is $1$-almost divisible.
\end{enumerate}
\end{rmk}

Since the reduction phenomenon \cite[Theorem~5.7]{AntPerThiVil24arX:PureCAlgs} does not hold for comparison or divisibility individually, the following questions remain open:

\begin{qst}
\label{qst:CompExt}
Does strict comparison (of positive elements by quasitraces) pass to extensions of \ca{s}?
\end{qst}

\begin{qst}
\label{qst:DivExt}
Does almost divisibility pass to extensions of \ca{s}?
\end{qst}

We end with an application of the main result to the structure of stable multiplier algebras.
A \ca{} is said to be \emph{separably $\calZ$-stable} if it contains a club of separable, $\calZ$-stable sub-\ca{s}.
Since every $\calZ$-stable \ca{} is pure (\cite{Ror04StableRealRankZ}), it follows from \cref{prp:PureSepDet} that every separably $\calZ$-stable \ca{} is pure.
In \cite[Theorem~B]{FarSza24arX:CoronaSSA}, Farah and Szab\'{o} show that a $\sigma$-unital \ca{} $A$ is separably $\calZ$-stable if and only if its multiplier algebra is.
In particular, if $A$ is a unital $\calZ$-stable \ca{}, then its \emph{stable multiplier algebra} $M(A\otimes\Cpct)$ is pure.
In \cref{exa:GroupCa} we give examples of pure, non-$\calZ$-stable \ca{s} whose stable multiplier algebra is pure.

In \cite{KucNgPer10PICorona} and \cite{KafNgZha19PICorona}, the authors study when the corona algebra of a simple \ca{} is purely infinite. Building on these results, we obtain:

%
%

\begin{prp}
	\label{prp:PureStableMult}
	Let $A$ be a simple, separable, unital \ca.
	\begin{enumerate}
		\item[{\rm (1)}] If $A$ is exact and has finitely many extremal traces and strict comparison of positive elements, then $M(A\otimes\Cpct)$ is pure if and only if $A$ is pure.
		\item[{\rm (2)}] If $A$ has stable rank one and strict comparison of positive elements by a unique tracial state, then $M(A\otimes\Cpct)$ is pure.
	\end{enumerate}
\end{prp}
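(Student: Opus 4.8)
The plan is to realize $M(A\otimes\Cpct)$ as an extension of its corona algebra by the stabilization and then apply the permanence result \cref{prp:PureExt}. Writing $Q(A\otimes\Cpct) := M(A\otimes\Cpct)/(A\otimes\Cpct)$ for the corona, one has the short exact sequence
\[
0 \longrightarrow A\otimes\Cpct \longrightarrow M(A\otimes\Cpct) \longrightarrow Q(A\otimes\Cpct) \longrightarrow 0,
\]
in which $A\otimes\Cpct$ is a closed ideal. By \cref{prp:PureExt}, $M(A\otimes\Cpct)$ is pure if and only if both $A\otimes\Cpct$ and $Q(A\otimes\Cpct)$ are pure, so the entire argument reduces to analyzing these two pieces separately.

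For the ideal, I would use that the Cuntz semigroup is a stabilization invariant: since $\Cu(A\otimes\Cpct)\cong\Cu(A)$ (using $\Cpct\otimes\Cpct\cong\Cpct$), the algebra $A\otimes\Cpct$ is pure if and only if $A$ is pure. For the corona, the key observation is that a purely infinite \ca{} is always pure, as recalled in the introduction: in a purely infinite \ca{} comparison between elements in the same ideal is automatic and every nonzero positive element is properly infinite, which yields both $0$-comparison and $0$-almost divisibility. Hence it suffices to show that $Q(A\otimes\Cpct)$ is purely infinite, and this is precisely the kind of statement established in \cite{KucNgPer10PICorona} and \cite{KafNgZha19PICorona}.

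For part~(1), I would invoke the pure infiniteness criterion of Kaftal--Ng--Zhang: under the standing hypotheses that $A$ is simple, separable, exact, has strict comparison and only finitely many extremal traces, the corona $Q(A\otimes\Cpct)$ is purely infinite, hence pure. Combined with the reduction above, this gives that $M(A\otimes\Cpct)$ is pure if and only if $A\otimes\Cpct$ is pure, that is, if and only if $A$ is pure; note that strict comparison of $A$ does not by itself force pureness of $A$, which is why the equivalence, rather than unconditional pureness, is the correct statement here. For part~(2), the hypotheses of stable rank one and strict comparison by a unique tracial state should first force $A$ itself to be pure: with a unique trace, strict comparison identifies comparison in $\Cu(A)$ with the rank function $d_\tau$, and stable rank one makes the soft part of $\Cu(A)$ sufficiently divisible to deduce almost divisibility, so that $A\otimes\Cpct$ is pure. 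The same hypotheses, through the corona pure infiniteness results of \cite{KucNgPer10PICorona}, give that $Q(A\otimes\Cpct)$ is purely infinite and hence pure. Applying \cref{prp:PureExt} once more then shows $M(A\otimes\Cpct)$ is pure unconditionally.

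The main obstacle is not the permanence step, which is handled cleanly by \cref{prp:PureExt}, but rather verifying that the corona is purely infinite and correctly matching our hypotheses to the criteria in \cite{KucNgPer10PICorona,KafNgZha19PICorona}; there the role of exactness, finiteness of the extremal trace set, and strict comparison is precisely to rule out the nonzero lower semicontinuous traces (and finite-rank obstructions) on the corona that obstruct pure infiniteness. A secondary technical point, specific to part~(2), is establishing that stable rank one together with strict comparison by a unique trace already implies pureness of $A$; this relies on the structure theory of the Cuntz semigroup of \ca{s} of stable rank one rather than on the extension machinery developed in this paper.
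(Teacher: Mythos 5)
Your proposal matches the paper's proof essentially step for step: the same extension $0 \to A\otimes\Cpct \to M(A\otimes\Cpct) \to M(A\otimes\Cpct)/(A\otimes\Cpct) \to 0$, the same reduction via \cref{prp:PureExt}, pure infiniteness of the corona drawn from \cite{KucNgPer10PICorona,KafNgZha19PICorona}, and for part~(2) pureness of $A$ from the stable rank one structure theory (the paper cites \cite[Corollary~8.12]{Thi20RksOps}). The only discrepancy is in which corona result serves which part --- the paper uses \cite[Theorem~4.5]{KucNgPer10PICorona} for~(1) and \cite[Corollaries~4.11 and~6.12]{KafNgZha19PICorona} (via projection surjectivity and injectivity) for~(2), roughly the reverse of your attribution --- but this does not affect the structure of the argument.
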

\begin{proof}
	We consider the extension
	\[
	0 \to A\otimes\Cpct \to M(A\otimes\Cpct) \to M(A\otimes\Cpct)/(A\otimes\Cpct) \to 0.
	\]
	(1): It was proved in \cite[Theorem~4.5]{KucNgPer10PICorona} that, under these assumptions, the corona $M(A\otimes\Cpct)/(A\otimes\Cpct)$ is purely infinite, hence pure. The result then follows from \cref{prp:PureExt}.
	
\noindent (2):	By \cite[Corollary~8.12]{Thi20RksOps}, $A\otimes\Cpct$ is pure. 
	Further, it follows from \cite[Corollary~4.11]{KafNgZha19PICorona} that $A\otimes\Cpct$ has projection surjectivity and injectivity, which allows us to apply \cite[Corollary~6.12]{KafNgZha19PICorona}.
	Since $A$ has a unique tracial state, it follows that its stable corona algebra is purely infinite, and therefore pure. Again the result follows from \cref{prp:PureExt}.
\end{proof}

\begin{exa}
\label{exa:GroupCa}
Let $G$ be a countable, discrete group that is acylindrically hyperbolic with trivial finite radical, and that has the rapid decay property.
Then the reduced group \ca{} $C^*_{\mathrm{red}}(G)$ is simple, separable, unital and has stable rank one (\cite[Theorem~1.1]{GerOsi20InvGpCAlgAcylHyperbolic}, see also \cite{Rau24arX:TwAcylHypGps}), and has strict comparison with respect to its unique tracial state (\cite[Theorem~B]{AmrGaoElaPat24arX:StrCompRedGpCAlgs}).
Therefore, \cref{prp:PureStableMult} applies and shows that $M(C^*_{\mathrm{red}}(G)\otimes\Cpct)$ is pure.

This applies in particular to free groups $\mathbb{F}_n$ for $n=2,3,\ldots,\infty$.
Thus, the stable multiplier algebras of the reduced free group \ca{s} $C^*_{\mathrm{red}}(\mathbb{F}_n)$ are pure.
\end{exa}



\begin{thebibliography}{AGKEP24}

\bibitem[AGKEP24]{AmrGaoElaPat24arX:StrCompRedGpCAlgs}
\bgroup\scshape{}T.~Amrutam\egroup{}, \bgroup\scshape{}D.~Gao\egroup{},
  \bgroup\scshape{}S.~Kunnawalkam~Elayavalli\egroup{}, and
  \bgroup\scshape{}G.~Patchell\egroup{}, Strict comparison in reduced group
  \ca{s}, preprint (arXiv:2412.06031 [math.OA]), 2024.

\bibitem[APRT21]{AntPerRobThi21Edwards}
\bgroup\scshape{}R.~Antoine\egroup{}, \bgroup\scshape{}F.~Perera\egroup{},
  \bgroup\scshape{}L.~Robert\egroup{}, and \bgroup\scshape{}H.~Thiel\egroup{},
  Edwards' condition for quasitraces on \ca{s},  \emph{Proc. Roy. Soc.
  Edinburgh Sect. A} \textbf{151} (2021), 525--547.

\bibitem[APRT24]{AntPerRobThi24TracesUltra}
\bgroup\scshape{}R.~Antoine\egroup{}, \bgroup\scshape{}F.~Perera\egroup{},
  \bgroup\scshape{}L.~Robert\egroup{}, and \bgroup\scshape{}H.~Thiel\egroup{},
  Traces on ultrapowers of \ca{s},  \emph{J. Funct. Anal.} \textbf{286} (2024),
  Paper No. 110341.

\bibitem[APS11]{AntPerSan11PullbacksCu}
\bgroup\scshape{}R.~Antoine\egroup{}, \bgroup\scshape{}F.~Perera\egroup{}, and
  \bgroup\scshape{}L.~Santiago\egroup{}, Pullbacks, {$C(X)$}-algebras, and
  their {C}untz semigroup,  \emph{J. Funct. Anal.} \textbf{260} (2011),
  2844--2880.

\bibitem[APT18]{AntPerThi18TensorProdCu}
\bgroup\scshape{}R.~Antoine\egroup{}, \bgroup\scshape{}F.~Perera\egroup{}, and
  \bgroup\scshape{}H.~Thiel\egroup{}, Tensor products and regularity properties
  of {C}untz semigroups,  \emph{Mem. Amer. Math. Soc.} \textbf{251} (2018),
  viii+191.

\bibitem[APT20]{AntPerThi20CuntzUltraproducts}
\bgroup\scshape{}R.~Antoine\egroup{}, \bgroup\scshape{}F.~Perera\egroup{}, and
  \bgroup\scshape{}H.~Thiel\egroup{}, Cuntz semigroups of ultraproduct \ca{s},
  \emph{J. Lond. Math. Soc. (2)} \textbf{102} (2020), 994--1029.

\bibitem[APTV24]{AntPerThiVil24arX:PureCAlgs}
\bgroup\scshape{}R.~Antoine\egroup{}, \bgroup\scshape{}F.~Perera\egroup{},
  \bgroup\scshape{}H.~Thiel\egroup{}, and \bgroup\scshape{}E.~Vilalta\egroup{},
  Pure \ca{s}, preprint (arXiv:2406.11052 [math.OA]), 2024.

\bibitem[APT11]{AraPerTom11Cu}
\bgroup\scshape{}P.~Ara\egroup{}, \bgroup\scshape{}F.~Perera\egroup{}, and
  \bgroup\scshape{}A.~S. Toms\egroup{}, {$K$}-theory for operator algebras.
  {C}lassification of \ca{s},  in \emph{Aspects of operator algebras and
  applications}, \emph{Contemp. Math.} \textbf{534}, Amer. Math. Soc.,
  Providence, RI, 2011, pp.~1--71.

\bibitem[Bla06]{Bla06OpAlgs}
\bgroup\scshape{}B.~Blackadar\egroup{}, \emph{Operator algebras},
  \emph{Encyclopaedia of Mathematical Sciences} \textbf{122}, Springer-Verlag,
  Berlin, 2006, Theory of \ca{s} and von Neumann algebras, Operator Algebras
  and Non-commutative Geometry, III.

\bibitem[BV25]{BosVil25PureHom}
\bgroup\scshape{}J.~Bosa\egroup{} and \bgroup\scshape{}E.~Vilalta\egroup{},
  Pure {$*$}-homomorphisms,  \emph{J. Funct. Anal.} \textbf{288} (2025), Paper
  No. 110739, 27.

\bibitem[CRS10]{CiuRobSan10CuIdealsQuot}
\bgroup\scshape{}A.~Ciuperca\egroup{}, \bgroup\scshape{}L.~Robert\egroup{}, and
  \bgroup\scshape{}L.~Santiago\egroup{}, The {C}untz semigroup of ideals and
  quotients and a generalized {K}asparov stabilization theorem,  \emph{J.
  Operator Theory} \textbf{64} (2010), 155--169.

\bibitem[CEI08]{CowEllIva08CuInv}
\bgroup\scshape{}K.~T. Coward\egroup{}, \bgroup\scshape{}G.~A.
  Elliott\egroup{}, and \bgroup\scshape{}C.~Ivanescu\egroup{}, The {C}untz
  semigroup as an invariant for \ca{s},  \emph{J.\ Reine Angew.\ Math.}
  \textbf{623} (2008), 161--193.

\bibitem[ERS11]{EllRobSan11Cone}
\bgroup\scshape{}G.~A. Elliott\egroup{}, \bgroup\scshape{}L.~Robert\egroup{},
  and \bgroup\scshape{}L.~Santiago\egroup{}, The cone of lower semicontinuous
  traces on a \ca{},  \emph{Amer. J. Math.} \textbf{133} (2011), 969--1005.

\bibitem[Far19]{Far19BookSetThyCAlg}
\bgroup\scshape{}I.~Farah\egroup{}, \emph{Combinatorial set theory of \ca{s}},
  \emph{Springer Monographs in Mathematics}, Springer, Cham, [2019] \copyright
  2019.

\bibitem[FHL{\etalchar{+}}21]{FarHarLupRobTikVigWin21ModelThy}
\bgroup\scshape{}I.~Farah\egroup{}, \bgroup\scshape{}B.~Hart\egroup{},
  \bgroup\scshape{}M.~Lupini\egroup{}, \bgroup\scshape{}L.~Robert\egroup{},
  \bgroup\scshape{}A.~Tikuisis\egroup{}, \bgroup\scshape{}A.~Vignati\egroup{},
  and \bgroup\scshape{}W.~Winter\egroup{}, Model theory of \ca{s},  \emph{Mem.
  Amer. Math. Soc.} \textbf{271} (2021), viii+127.

\bibitem[FS24]{FarSza24arX:CoronaSSA}
\bgroup\scshape{}I.~Farah\egroup{} and \bgroup\scshape{}G.~Szab\'{o}\egroup{},
  Coronas and strongly self-absorbing \ca{s}, preprint (arXiv:2411.02274),
  2024.

\bibitem[GP24]{GarPer23arX:ModernCu}
\bgroup\scshape{}E.~Gardella\egroup{} and \bgroup\scshape{}F.~Perera\egroup{},
  The modern theory of {C}untz semigroups of \ca{s}, EMS Surv. Math. Sci. (to
  appear), DOI: 10.4171/EMSS/84, 2024.

\bibitem[Ge98]{Ge98ApplFreeEntropy2}
\bgroup\scshape{}L.~Ge\egroup{}, Applications of free entropy to finite von
  {N}eumann algebras. {II},  \emph{Ann. of Math. (2)} \textbf{147} (1998),
  143--157.

\bibitem[GO20]{GerOsi20InvGpCAlgAcylHyperbolic}
\bgroup\scshape{}M.~Gerasimova\egroup{} and \bgroup\scshape{}D.~Osin\egroup{},
  On invertible elements in reduced \ca{s} of acylindrically hyperbolic groups,
   \emph{J. Funct. Anal.} \textbf{279} (2020), 108689, 22.

\bibitem[KNZ19]{KafNgZha19PICorona}
\bgroup\scshape{}V.~Kaftal\egroup{}, \bgroup\scshape{}P.~W. Ng\egroup{}, and
  \bgroup\scshape{}S.~Zhang\egroup{}, Purely infinite corona algebras,
  \emph{J. Operator Theory} \textbf{82} (2019), 307--355.

\bibitem[KNZ17]{KafNgZha17StrCompMultiplier}
\bgroup\scshape{}V.~Kaftal\egroup{}, \bgroup\scshape{}P.~W. Ng\egroup{}, and
  \bgroup\scshape{}S.~Zhang\egroup{}, Strict comparison of positive elements in
  multiplier algebras,  \emph{Canad. J. Math.} \textbf{69} (2017), 373--407.

\bibitem[KR14]{KirRor14CentralSeq}
\bgroup\scshape{}E.~Kirchberg\egroup{} and
  \bgroup\scshape{}M.~R{\o}rdam\egroup{}, Central sequence \ca{s} and tensorial
  absorption of the {J}iang-{S}u algebra,  \emph{J. Reine Angew. Math.}
  \textbf{695} (2014), 175--214.

\bibitem[KNP10]{KucNgPer10PICorona}
\bgroup\scshape{}D.~Kucerovsky\egroup{}, \bgroup\scshape{}P.~W. Ng\egroup{},
  and \bgroup\scshape{}F.~Perera\egroup{}, Purely infinite corona algebras of
  simple \ca{s},  \emph{Math. Ann.} \textbf{346} (2010), 23--40.

\bibitem[Rau24]{Rau24arX:TwAcylHypGps}
\bgroup\scshape{}S.~Raum\egroup{}, Twisted group \ca{s} of acylindrically
  hyperbolic groups have stable rank one, Groups, Geom. and Dyn. (to appear),
  preprint (2403.04649 [math.OA]), 2024.

\bibitem[Rob13]{Rob13Cone}
\bgroup\scshape{}L.~Robert\egroup{}, The cone of functionals on the {C}untz
  semigroup,  \emph{Math. Scand.} \textbf{113} (2013), 161--186.

\bibitem[RT17]{RobTik17NucDimNonSimple}
\bgroup\scshape{}L.~Robert\egroup{} and \bgroup\scshape{}A.~Tikuisis\egroup{},
  Nuclear dimension and {$\mathcal{Z}$}-stability of non-simple \ca{s},
  \emph{Trans. Amer. Math. Soc.} \textbf{369} (2017), 4631--4670.

\bibitem[R{\o}r04]{Ror04StableRealRankZ}
\bgroup\scshape{}M.~R{\o}rdam\egroup{}, The stable and the real rank of
  {$\mathcal{Z}$}-absorbing \ca{s},  \emph{Internat. J. Math.} \textbf{15}
  (2004), 1065--1084.

\bibitem[Sat12]{Sat12arx:TraceSpace}
\bgroup\scshape{}Y.~Sato\egroup{}, Trace spaces of simple nuclear \ca{s} with
  finite-dimensional extreme boundary, preprint (arXiv:1209.3000 [math.OA]),
  2012.

\bibitem[Thi20]{Thi20RksOps}
\bgroup\scshape{}H.~Thiel\egroup{}, Ranks of operators in simple \ca{s} with
  stable rank one,  \emph{Comm. Math. Phys.} \textbf{377} (2020), 37--76.

\bibitem[Thi23]{Thi23grSubhom}
\bgroup\scshape{}H.~Thiel\egroup{}, The generator rank of subhomogeneous
  \ca{s},  \emph{Canad. J. Math.} \textbf{75} (2023), 1314--1342.

\bibitem[Thi24]{Thi24RRExt}
\bgroup\scshape{}H.~Thiel\egroup{}, Real rank of extensions of \ca{s},
  \emph{Studia Math.} \textbf{276} (2024), 131--155.

\bibitem[TV21]{ThiVil21DimCu2}
\bgroup\scshape{}H.~Thiel\egroup{} and \bgroup\scshape{}E.~Vilalta\egroup{},
  Covering dimension of {C}untz semigroups {II},  \emph{Internat. J. Math.}
  \textbf{32} (2021), 27~p., Paper No. 2150100.

\bibitem[TV24]{ThiVil24NowhereScattered}
\bgroup\scshape{}H.~Thiel\egroup{} and \bgroup\scshape{}E.~Vilalta\egroup{},
  Nowhere scattered \ca{s},  \emph{J. Noncommut. Geom.} \textbf{18} (2024),
  231--263.

\bibitem[Tom08]{Tom08ClassificationNuclear}
\bgroup\scshape{}A.~S. Toms\egroup{}, On the classification problem for nuclear
  \ca{s},  \emph{Ann. of Math. (2)} \textbf{167} (2008), 1029--1044.

\bibitem[TWW15]{TomWhiWin15ZStableFdBauer}
\bgroup\scshape{}A.~S. Toms\egroup{}, \bgroup\scshape{}S.~White\egroup{}, and
  \bgroup\scshape{}W.~Winter\egroup{}, {$\mathcal{Z}$}-stability and
  finite-dimensional tracial boundaries,  \emph{Int. Math. Res. Not. IMRN}
  (2015), 2702--2727.

\bibitem[TW07]{TomWin07ssa}
\bgroup\scshape{}A.~S. Toms\egroup{} and \bgroup\scshape{}W.~Winter\egroup{},
  Strongly self-absorbing \ca{s},  \emph{Trans. Amer. Math. Soc.} \textbf{359}
  (2007), 3999--4029.

\bibitem[Win12]{Win12NuclDimZstable}
\bgroup\scshape{}W.~Winter\egroup{}, Nuclear dimension and
  {$\mathcal{Z}$}-stability of pure \ca{s},  \emph{Invent. Math.} \textbf{187}
  (2012), 259--342.

\bibitem[Win18]{Win18ICM}
\bgroup\scshape{}W.~Winter\egroup{}, Structure of nuclear \ca{s}: from
  quasidiagonality to classification and back again,  in \emph{Proceedings of
  the {I}nternational {C}ongress of {M}athematicians---{R}io de {J}aneiro 2018.
  {V}ol. {III}. {I}nvited lectures}, World Sci. Publ., Hackensack, NJ, 2018,
  pp.~1801--1823.

\end{thebibliography}

\providecommand{\etalchar}[1]{$^{#1}$}
\providecommand{\bysame}{\leavevmode\hbox to3em{\hrulefill}\thinspace}
\providecommand{\noopsort}[1]{}
\providecommand{\mr}[1]{\href{http://www.ams.org/mathscinet-getitem?mr=#1}{MR~#1}}
\providecommand{\zbl}[1]{\href{http://www.zentralblatt-math.org/zmath/en/search/?q=an:#1}{Zbl~#1}}
\providecommand{\jfm}[1]{\href{http://www.emis.de/cgi-bin/JFM-item?#1}{JFM~#1}}
\providecommand{\arxiv}[1]{\href{http://www.arxiv.org/abs/#1}{arXiv~#1}}
\providecommand{\doi}[1]{\url{http://dx.doi.org/#1}}
\providecommand{\MR}{\relax\ifhmode\unskip\space\fi MR }
\providecommand{\MRhref}[2]{%
  \href{http://www.ams.org/mathscinet-getitem?mr=#1}{#2}
}
\providecommand{\href}[2]{#2}

\end{document}